\documentclass[reqno]{amsart}

\synctex=1

\usepackage{amsmath,amssymb,amsthm,amsfonts,enumerate, enumitem,hyperref,cleveref}
\usepackage{dsfont}
\usepackage[all]{xy}
\usepackage[T1]{fontenc}
\usepackage{tikz}
\usepackage{pgfplots}
\pgfplotsset{compat=1.15}
\usepackage{mathrsfs}
\usetikzlibrary{arrows}

\DeclareMathOperator{\inn}{Inn}
\DeclareMathOperator{\laut}{LAut}
\DeclareMathOperator{\Span}{Span}

\DeclareMathOperator{\Max}{Max}
\DeclareMathOperator{\Min}{Min}

\theoremstyle{plain}
\newtheorem{theorem}{Theorem}[section]
\newtheorem{corollary}[theorem]{Corollary}
\newtheorem{proposition}[theorem]{Proposition}
\newtheorem{lemma}[theorem]{Lemma}

\theoremstyle{definition}

\newtheorem{remark}[theorem]{Remark}

\crefrangeformat{equation}{#3(#1)#4--#5(#2)#6}

\crefname{theorem}{Theorem}{Theorems}
\crefname{lemma}{Lemma}{Lemmas}
\crefname{corollary}{Corollary}{Corollaries}
\crefname{proposition}{Proposition}{Propositions}
\crefname{definition}{Definition}{Definitions}
\crefname{example}{Example}{Examples}
\crefname{remark}{Remark}{Remarks}
\crefname{conjecture}{Conjecture}{Conjectures}
\crefname{section}{Section}{Sections}
\crefname{equation}{\unskip}{\unskip}
\crefname{enumi}{\unskip}{\unskip}
\crefname{subsection}{Subsection}{Subsections}

\newcommand{\0}{\theta}
\newcommand{\ve}{\varepsilon}

\newcommand{\af}{\alpha}
\newcommand{\bt}{\beta}
\newcommand{\lb}{\lambda}

\newcommand{\gm}{\gamma}
\newcommand{\vf}{\varphi}

\newcommand{\dl}{\delta}

\newcommand{\sg}{\sigma}

\newcommand{\B}{\mathcal{B}}

\newcommand{\m}{{}^{-1}}
\newcommand{\sst}{\subseteq}

\newcommand{\wtl}{\widetilde}

\newcommand{\id}{\mathrm{id}}

\begin{document}
	\title[Regular Hom-Lie structures on incidence algebras]{Regular Hom-Lie structures\\ on incidence algebras}	
	\author{\'Erica Z. Fornaroli}
	\address{Departamento de Matem\'atica, Universidade Estadual de Maring\'a, Maring\'a, PR, CEP: 87020--900, Brazil}
	\email{ezancanella@uem.br}
	
	\author{Mykola Khrypchenko}
	\address{Departamento de Matem\'atica, Universidade Federal de Santa Catarina,  Campus Reitor Jo\~ao David Ferreira Lima, Florian\'opolis, SC, CEP: 88040--900, Brazil \and CMUP, Departamento de Matemática, Faculdade de Ciências, Universidade do Porto,
		Rua do Campo Alegre s/n, 4169--007 Porto, Portugal}
	\email{nskhripchenko@gmail.com}
	
	\author{Ednei A. Santulo Jr.}
	\address{Departamento de Matem\'atica, Universidade Estadual de Maring\'a, Maring\'a, PR, CEP: 87020--900, Brazil}
	\email{easjunior@uem.br}
	
	\subjclass[2020]{Primary: 16S50, 17B61, 17B60, 17B40; secondary: 06A07}
	\keywords{Hom-Lie structure, incidence algebra, Lie automorphism, multiplicative automorphism, inner automorphism}
	
	\begin{abstract}
		We fully characterize regular Hom-Lie structures on the incidence algebra $I(X,K)$ of a finite connected poset $X$ over a field $K$. We prove that such a structure is the sum of a central-valued linear map annihilating the Jacobson radical of $I(X,K)$ with the composition of certain inner and multiplicative automorphisms of $I(X,K)$. 
	\end{abstract}
	
	\maketitle
	
	\tableofcontents
	
	\section*{Introduction}
	
	A \textit{Hom-Lie algebra}~\cite{Makhlouf-Silvestrov08} over a field $K$ is a triple $(L,[\cdot,\cdot],\vf)$, where $L$ is a $K$-vector space, $[\cdot,\cdot]$ is an anti-commutative bilinear multiplication on $L$ and $\vf:L\to L$ is a linear map satisfying the so-called \textit{Hom-Jacobi identity}
	\begin{align}\label{hom-jacobi}
		[[a,b],\vf(c)]+[[b,c],\vf(a)]+[[c,a],\vf(b)]=0 
	\end{align}
	for all $a,b,c\in L$. The notion of a Hom-Lie algebra first appeared in~\cite{Hartwig-Larsson-Silvestrov06} (under a slightly different definition), where the authors gave a construction of such an algebra based on a commutative associative algebra with a $\sg$-derivation. A Hom-Lie algebra $(L,[\cdot,\cdot],\vf)$, in which $\vf$ is a homomorphism (resp. isomorphism) of $L$, is called \textit{multiplicative} (resp.~\textit{regular})~\cite{Makhlouf-Zusmanovich18}. If $(L,[\cdot,\cdot])$ is itself a (usual) Lie algebra, then by a (multiplicative, regular) \textit{Hom-Lie structure} on $L$ we mean a linear map $\vf:L\to L$ making $(L,[\cdot,\cdot],\vf)$ a (multiplicative, regular) Hom-Lie algebra. For example, $\vf=\lb\cdot\id$, where $\lb\in K$, is always a Hom-Lie structure called \textit{trivial}. It is multiplicative only when $\lb\in\{0,1\}$.
	
	Jin and Li~\cite{Jin-Li08} proved that all multiplicative Hom-Lie structures on a finite-dimensional simple Lie algebra are trivial. Xie, Jin and Liu improved the previous result and described arbitrary (not necessarily multiplicative) Hom-Lie structures on finite-dimensional simple Lie algebras in~\cite{Xie-Jin-Liu15}. The superalgebra case was investigated in \cite{Cao-Luo13,Yuan-Liu15}. Xie and Liu characterized Hom-Lie structures on the Virasoro algebra and on the loop and Cartan Lie algebras, completing thus a description of Hom-Lie structures on graded simple Lie algebras of finite growth~\cite{Xie-Liu17}. Makhlouf and Zusmanovich~\cite{Makhlouf-Zusmanovich18} showed that the space of Hom-Lie structures on an affine Kac–Moody algebra is linearly spanned by central Hom-Lie structures and the identity map. Benayadi and Makhlouf~\cite{Benayadi-Makhlouf14} established a correspondence between the class of involutive quadratic Hom–Lie algebras and the class of quadratic simple Lie algebras admitting involutive automorphisms. Chen and Yu~\cite{Chen-Yu20} proved that any regular Hom-Lie structure on a Borel subalgebra of a finite-dimensional simple Lie algebra over an algebraically closed field of characteristic zero is an extremal inner automorphism. Another paper by Chen and Yu~\cite{Chen-Yu22} is of special interest to us. In~\cite{Chen-Yu22} the authors describe regular Hom–Lie structures on the Lie algebra of \textit{strictly} upper triangular matrices over a field. Recall that the algebra $T_n(K)$ of \textit{all} upper triangular matrices of order $n$ over $K$ is a particular case of the incidence algebra $I(X,K)$ of a finite connected poset $X$. Lie automorphisms of $I(X,K)$ were fully described in~\cite{FKS}, so it is natural to use this description to characterize those Lie automorphisms which are (regular) Hom-Lie structures on $I(X,K)$ (endowed with the commutator Lie bracket $[\cdot,\cdot]$). This is done in the present paper.
	
	In \cref{sec-prelim} we give the necessary background on Lie automorphisms~\cite{FKS,FKS2} and usual automorphisms~\cite{Baclawski72,St} of $I(X,K)$. In \cref{0=identity} we obtain the conditions that a Lie automorphism $\vf$ of $I(X,K)$ must satisfy to be a regular Hom-Lie structure. These conditions turn out to be sufficient as well, whenever $\vf$ is elementary, as shown in \cref{0=id-and=sg=1=>vf-Hom-Lie}. As a consequence, a characterization of the \textit{elementary} Lie automorphisms of $I(X,K)$ that are regular Hom-Lie structures is summarized in \cref{vf-Hom_lie<=>0=id-and-sg=1}. After that, we are left to deal with a class of inner automorphisms of $I(X,K)$, which is done in \cref{xi-Hom-Lie<=>rho-in-Z}. The next \cref{vf-Hom-Lie<=>wtl-vf-and-xi-Hom-Lie} permits us to combine the results of \cref{0=id-and=sg=1=>vf-Hom-Lie,xi-Hom-Lie<=>rho-in-Z} into \cref{main-theorem}, which describes \textit{all} regular Hom-Lie structures on $I(X,K)$ in terms of simpler maps coming from~\cite{FKS}. A more explicit description is given in \cref{vf=xi_bt-circ-M_sg+nu}, specified to the case of posets of length one in \cref{dropping-bt_D=id}. As an application, we characterize regular Hom-Lie structures on the algebra of upper triangular matrices (seen as an incidence algebra) in \cref{Hom-Lie-T_n(K)}. We end the work with \cref{Hom-Lie-normal-subgroup} which is an analog of~\cite[Corollary 3.5]{Chen-Yu22} proving that the set of regular Hom-Lie structures on $I(X,K)$ is a normal subgroup of the group of all Lie automorphisms of $I(X,K)$.
	
	\section{Preliminaries}\label{sec-prelim}
	
	\subsection{Posets and incidence algebras}
	
	Let $(X,\le)$ be a finite poset. A \textit{chain} in $X$ is a linearly ordered subposet of $X$. The \textit{length} of a chain $C\sst X$ is defined to be $|C|-1$. The \textit{length} of $X$, denoted by $l(X)$, is the maximum length of all chains $C\sst X$. For simplicity, we write $l(x,y)$ for $l(\{z\in X : x\leq z\leq y\})$. The poset $X$ is \emph{connected} if for any pair of $x,y\in X$ there is a sequence $x=x_0,\dots,x_m=y$ in $X$ such that for all $0\le i\le m-1$ either $x_i\le x_{i+1}$ or $x_i\ge x_{i+1}$. We will denote by $\Min(X)$ (resp.~$\Max(X)$) the set of minimal (resp.~maximal) elements of $X$ and by $X^2_<$ (resp.~$X^2_{\leq}$) the set of pairs $(x,y)\in X^2$ such that $x<y$ (resp.~$x\leq y$).
	
	Let $X$ be a finite poset and $K$ a field. The \emph{incidence algebra} $I(X,K)$ of $X$ over $K$ (see~\cite{Rota64}) is the $K$-space with basis $\{e_{xy} : x\leq y\}$
	and multiplication given by
	$$
	e_{xy}e_{uv}=\dl_{yu}e_{xv},
	$$
	for any $x\leq y$, $u\leq v$ in $X$ (here $\dl_{yu}$ is the Kronecker delta). Given $f\in I(X,K)$, we write $f=\sum_{x\le y}f(x,y)e_{xy}$, where $f(x,y)\in K$. Let us denote $e_x := e_{xx}$. Then $I(X,K)$ is an associative $K$-algebra with identity element $\dl=\sum_{x\in X}e_x$.
	
	Throughout the rest of the paper $X$ will be a connected finite poset with $|X|>1$. Then $I(X,K)$ is a central algebra, by \cite[Corollary~1.3.15]{SpDo}.
	
	By \cite[Theorem~1.2.3]{SpDo}, the group of units of $I(X,K)$, denoted by $I(X,K)^{\ast}$, consists of all $f\in I(X,K)$ such that $f(x,x)\neq 0$ for all $x\in X$. It is well-known (see~\cite[Theorem~4.2.5]{SpDo}) that the Jacobson radical $J(I(X,K))$ of $I(X,K)$ is spanned by $\B:=\{e_{xy} : x<y\}$. \emph{Diagonal elements} of $I(X,K)$ are those $f\in I(X,K)$ satisfying $f(x,y)=0$ for $x\neq y$. They form a commutative subalgebra $D(X,K)$ of $I(X,K)$ with basis $\{e_{x} : x \in X\}$. Hence, each $f\in I(X,K)$ can be uniquely written as $f=f_D+f_J$ with $f_D\in D(X,K)$ and $f_J\in J(I(X,K))$.
	
	For each positive integer $m$, $J(I(X,K))^m=\Span_K\{e_{xy} : l(x,y)\geq m\}$, by \cite[Proposition~2.4]{FKS}. We will use the notation $J(I(X,K))^{0}=I(X,K)$. By \cite[Proposition 2.5]{FKS}, the center $Z(J(I(X,K)))$ of $J(I(X,K))$ is $\Span_K\{e_{xy}\in \B : x\in \Min(X) \text{ and } y\in\Max(X)\}$. It also coincides with the (bilateral) annihilator of $J(I(X,K))$.
	
	\subsection{Automorphisms and Lie automorphisms of $I(X,K)$}
	
	We are going to deal with two classes of automorphisms of $I(X,K)$, whose definitions are given below.
	
	An element $\sg\in I(X,K)$ is \emph{multiplicative} if $\sg(x,y)\neq 0$, for all $x\le y$, and $\sg(x,z)=\sg(x,y)\sg(y,z)$ whenever $x\le y\le z$. Every such $\sg$ determines the corresponding \emph{multiplicative automorphism} $M_{\sg}$ of $I(X,K)$ by $M_{\sg}(f)=\sg\ast f$, for all $f\in I(X,K)$, where $\sg\ast f$ is the Hadamard product given by $(\sg\ast f)(x,y)=\sg(x,y)f(x,y)$ for all $x,y\in X$. If $\sg, \tau \in I(X,K)$ are multiplicative, then so is $\sg\ast\tau$ and $M_\sg\circ M_\tau=M_{\sg*\tau}$ (see~\cite[7.3]{SpDo}).
	
	For each $\beta\in I(X,K)^{\ast}$ we denote by $\xi_{\beta}$ the inner automorphism of $I(X,K)$ given by $\xi_{\beta}(f)=\bt f \bt\m$ for all $f\in I(X,K)$. 
	
	Regarding the Lie automorphisms of $I(X,K)$, i.e. linear bijections of $I(X,K)$ preserving the commutator product $[f,g]=fg-gf$, we are going to use definitions and results from \cite{FKS}. 
	
	Let $C: u_1<u_2<\dots<u_m$ be a maximal chain in $X$ and $\0:\B\to \B$ a bijection. Then $\0$ is called \textit{increasing (resp.~decreasing) on $C$} if there exists a maximal chain $D: v_1<v_2<\dots<v_m$ in $X$ such that $\0(e_{u_iu_j})=e_{v_iv_j}$ for all $1\le i<j\le m$ (resp.~$\0(e_{u_iu_j})=e_{v_{m-j+1}v_{m-i+1}}$ for all $1\le i<j\le m$). Moreover, we say that $\0$ \textit{is monotone on maximal chains in $X$} if, for any maximal chain $C$ in $X$, $\0$ is increasing or decreasing on $C$. 
	
	Let $\0:\B\to \B$ be a bijection. A map $\sg:X^2_<\to K^*$ is \textit{compatible} with $\0$ if, for all $x<y<z$,
	\begin{align*}
		\sg(x,z)=
		\begin{cases}
			\sg(x,y)\sg(y,z), &\text{if }\0(e_{xz})=\0(e_{xy})\0(e_{yz}),\\
			-\sg(x,y)\sg(y,z), &\text{if }\0(e_{xz})=\0(e_{yz})\0(e_{xy}).
		\end{cases}
	\end{align*}
	In particular, if $\0$ is increasing on \textit{all} the maximal chains in $X$, then any such $\sg$ extends to a multiplicative element of $I(X,K)$ by setting $\sg(x,x)=1$ for all $x\in X$.
	There is also the notion of an \textit{admissible} bijection $\B\to \B$ whose definition is quite technical and will not be used explicitly in the present work (just let us mention that $\id_\B$ is always admissible). We refer the interested reader to~\cite[Definition 5.11]{FKS}. 
	
	Let $X=\{x_1,\dots, x_n\}$. Given an admissible monotone bijection $\0: \B\to \B$, a map $\sg:X_<^2\to K^*$ compatible with $\0$ and a sequence $c=(c_1,\dots,c_n)\in K^n$ such that $\sum_{i=1}^nc_i\in K^*$, we define in \cite[Definition~5.17]{FKS} the following Lie automorphism $\tau=\tau_{\0,\sg,c}$ of $I(X,K)$ where, for any $e_{xy}\in \B$, 
	$$
	\tau(e_{xy})=\sigma(x,y)\0(e_{xy})
	$$
	and $\tau|_{D(X,K)}$ is uniquely determined by
	$$
	\tau(e_{x_i})(x_1,x_1)=c_i,
	$$
	$i=1,\dots,n$, as in Lemmas~5.8 and 5.16 from \cite{FKS} (the admissibility of $\0$ guarantees that $\tau|_{D(X,K)}$ is well-defined). The Lie automorphisms of the form $\tau_{\0,\sg,c}$ are exactly the so-called \textit{elementary Lie automorphisms}. Moreover, the triple $(\0,\sg,c)$ is uniquely determined by $\tau$ (see~\cite[Definitions 4.1, 4.7 and Theorem 5.18]{FKS}).
	
	As in \cite{FKS}, we denote by $\laut(I(X,K))$ the group of Lie automorphisms of $I(X,K)$ and by $\wtl\laut(I(X,K))$ its subgroup of elementary Lie automorphisms. We will also use the notation $\inn_1(I(X,K))$ for the subgroup of inner automorphisms $\xi_{\beta}$ with $\bt_D=\dl$. Observe that such $\bt$ is uniquely determined by $\xi_\bt$, so in fact $\inn_1(I(X,K))$ is isomorphic to the subgroup of all $\bt\in I(X,K)^*$ with $\bt_D=\dl$.
	
	\begin{theorem}\cite[Theorem~4.15]{FKS}\label{semidireto}
		The group $\laut(I(X,K))$ is isomorphic to the semidirect product $\inn_1(I(X,K))\rtimes\wtl\laut(I(X,K))$.
	\end{theorem}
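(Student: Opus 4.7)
The plan is to establish $\laut(I(X,K))$ as an internal semidirect product of its subgroups $\inn_1(I(X,K))$ and $\wtl\laut(I(X,K))$. This requires verifying three points: (i) every $\vf\in\laut(I(X,K))$ admits a factorization $\vf=\xi_\bt\circ\tau$ with $\xi_\bt\in\inn_1(I(X,K))$ and $\tau\in\wtl\laut(I(X,K))$; (ii) the intersection $\inn_1(I(X,K))\cap\wtl\laut(I(X,K))$ is trivial, equivalent to uniqueness of such a factorization; and (iii) $\inn_1(I(X,K))$ is normal in $\laut(I(X,K))$.

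For (i), I would first extract a candidate elementary Lie automorphism $\tau=\tau_{\0,\sg,c}$ from $\vf$ by analyzing its action on the basis $\B$ modulo $J(I(X,K))^2$. Using that $[e_{xy},e_{yz}]=e_{xz}$ whenever $x<y<z$ and that $\vf$ preserves brackets, one argues that each $\vf(e_{xy})$ is a nonzero scalar multiple of some $e_{uv}$ plus an element of $J(I(X,K))^2$; this defines a bijection $\0\colon\B\to\B$ and a map $\sg\colon X^2_<\to K^*$ via
\[
\vf(e_{xy})\equiv\sg(x,y)\0(e_{xy})\pmod{J(I(X,K))^2}.
\]
The Jacobi identity then forces the compatibility of $\sg$ with $\0$ and the monotonicity of $\0$ on each maximal chain, while admissibility follows from the consistency of $\vf$ with the diagonal action. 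Setting $c_i:=\vf(e_{x_i})(x_1,x_1)$ and $\tau:=\tau_{\0,\sg,c}$, the composition $\psi:=\tau\m\circ\vf$ is the identity modulo $J(I(X,K))^2$ on $\B$ and the identity on $D(X,K)$. Since $l(X)<\infty$, the filtration $J(I(X,K))\supsetneq J(I(X,K))^2\supsetneq\cdots\supsetneq J(I(X,K))^{l(X)+1}=0$ is finite, and a step-by-step inductive construction produces $\bt\in\dl+J(I(X,K))$ with $\psi=\xi_\bt$; thus $\vf=\tau\circ\xi_\bt$, which by the normality in (iii) rewrites as $\xi_{\bt'}\circ\tau$ for some $\bt'\in\dl+J(I(X,K))$.

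For (ii), if $\xi_\bt=\tau_{\0,\sg,c}$ with $\bt_D=\dl$, then $\xi_\bt(f)-f\in J(I(X,K))$ for every $f\in I(X,K)$; inspecting the image of each $e_{xy}$ modulo $J(I(X,K))^2$ forces $\0=\id_\B$ and $\sg\equiv 1$, and inspecting the image of each $e_{x_i}$ forces $c_i=1$, so both maps equal $\id$. For (iii), an elementary $\tau$ preserves $D(X,K)$ and $J(I(X,K))$, and a direct computation on basis elements (treating chains on which $\0$ is increasing and those on which it is decreasing separately) yields $\tau\circ\xi_\bt\circ\tau\m=\xi_{\bt'}$ for some $\bt'\in K^*(\dl+J(I(X,K)))$; by the centrality of $I(X,K)$ we may rescale $\bt'$ to have diagonal part $\dl$, placing the conjugate in $\inn_1(I(X,K))$. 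The main obstacle is the inductive reconstruction of $\bt$ in (i): at each step along the radical filtration one must solve a system of equations whose solvability encodes exactly the Lie automorphism property of $\psi$, and it is precisely here that the compatibility and admissibility conditions built into $\tau_{\0,\sg,c}$ do the essential work.
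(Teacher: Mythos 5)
The paper offers no proof of this statement: it is imported verbatim from \cite[Theorem 4.15]{FKS}, so your attempt can only be measured against the argument given there. Your skeleton --- (i) factor $\vf=\xi_\bt\circ\tau_{\0,\sg,c}$, (ii) show the intersection of the two subgroups is trivial, (iii) show $\inn_1(I(X,K))$ is normal --- is the correct one and is essentially how \cite{FKS} proceeds. But several intermediate claims are wrong as stated. The congruence $\vf(e_{xy})\equiv\sg(x,y)\0(e_{xy})\pmod{J(I(X,K))^2}$ cannot define $\0$ and $\sg$ on all of $\B$: since $J(I(X,K))^i=[J(I(X,K))^{i-1},J(I(X,K))]$, a Lie automorphism preserves the radical filtration, so for $l(x,y)\ge 2$ the element $\vf(e_{xy})$ already lies in $J(I(X,K))^2$ and the congruence is vacuous; the leading term must be extracted modulo $J(I(X,K))^{l(x,y)+1}$, exactly as in the formula $\vf(e_{xy})=\wtl\vf(e_{xy})+\rho_{xy}$ recalled in \cref{sec-prelim}. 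Likewise $\psi=\tau\m\circ\vf$ is not the identity on $D(X,K)$ --- the images $\vf(e_x)$ have nonzero radical parts in general --- only the identity modulo $J(I(X,K))$ there. And in (ii) the diagonal datum of the identity automorphism is $c=(1,0,\dots,0)$, not $c_i=1$ for all $i$, since $c_i=\tau(e_{x_i})(x_1,x_1)$ and $e_{x_i}(x_1,x_1)$ vanishes for $i\ne 1$.

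More importantly, every point at which the theorem could actually fail is asserted rather than argued: that the leading term of $\vf(e_{xy})$ is a nonzero multiple of a \emph{single} basis element rather than a combination, that the resulting $\0$ is a bijection which is monotone on maximal chains and admissible, that $\sg$ is compatible with $\0$, that $\tau_{\0,\sg,c}$ is then a well-defined Lie automorphism, and that the unipotent residue $\psi$ is inner with $\bt_D=\dl$. You flag only the last item as ``the main obstacle'', but all of these are where the content lies; in \cite{FKS} their verification occupies most of the paper. As it stands the proposal is a correct roadmap to the known proof rather than a proof.
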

	
	Let $\varphi\in\laut(I(X,K))$. By \cref{semidireto}, $\varphi=\xi_\bt\circ \tau_{\0,\sg,c}$ for a unique quadruple $(\bt,\0,\sg,c)$ with $\bt_D=\dl$. In this case we write $\wtl\vf:=\tau_{\0,\sg,c}$, $\0_{\vf}:=\0$ and $\sg_{\vf}:=\sg$. For any $e_{xy}\in J(I(X,K))^i-J(I(X,K))^{i+1}$ we have
	\begin{align*}
		\vf(e_{xy})=\wtl\vf(e_{xy})+\rho_{xy},
	\end{align*}
	with $\wtl\vf(e_{xy})\in J(I(X,K))^i-J(I(X,K))^{i+1}$ and $\rho_{xy}\in J(I(X,K))^{i+1}$ (see \cite[Definition~4.1]{FKS}).

	\section{Regular Hom-Lie structures on $I(X,K)$}\label{sec-reg-Hom-Lie}

	\begin{remark}\label{psi-vf-psi-inv-Hom-Lie}
		Let $(L,[\cdot,\cdot])$ be a Lie algebra.
		\begin{enumerate}
			\item A linear map $\vf: L\to L$ is a Hom-Lie structure if and only if \cref{hom-jacobi} holds for all $a,b,c$ from a basis of $L$.
			\item If $\vf$ is a regular Hom-Lie structure on $L$ and $\psi$ is an arbitrary automorphism of $L$, then $\psi\circ\vf\circ\psi\m$ is also a regular Hom-Lie structure on $L$. Indeed, this follows by taking $a=\psi\m(a')$, $b=\psi\m(b')$ and $c=\psi\m(c')$ in \cref{hom-jacobi} and applying $\psi\m$ to both sides of \cref{hom-jacobi}. 
		\end{enumerate}
	\end{remark}
	
	\begin{lemma}\label{basic-formulas}
		Let $\vf\in\laut(I(X,K))$ be a regular Hom-Lie structure on $I(X,K)$. If $x<y<z$, then $[\vf(e_{xy}),e_{yz}]=[e_{xz},\vf(e_z)]$ and $[e_{xy},\vf(e_{yz})]=[\vf(e_x),e_{xz}]$.
	\end{lemma}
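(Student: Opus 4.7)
The plan is to specialize the Hom-Jacobi identity \cref{hom-jacobi} to two well-chosen triples of basis elements of $I(X,K)$. Since the commutator of two basis elements $e_{uv}$, $e_{u'v'}$ in $I(X,K)$ vanishes unless $v=u'$ or $v'=u$, one should pick triples so that only two of the three inner brackets $[a,b]$, $[b,c]$, $[c,a]$ are nonzero, and the two surviving terms are exactly the ones appearing in the desired identity.

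For the first formula, take $a=e_{xy}$, $b=e_{yz}$, $c=e_z$. A direct computation using the multiplication $e_{uv}e_{u'v'}=\dl_{vu'}e_{uv'}$ gives
\[
[a,b]=e_{xz},\qquad [b,c]=e_{yz},\qquad [c,a]=0,
\]
the last equality holding because neither $e_z e_{xy}$ nor $e_{xy}e_z$ survives the Kronecker delta (as $x<z$ and $y<z$). Substituting into the Hom-Jacobi identity one obtains
\[
[e_{xz},\vf(e_z)]+[e_{yz},\vf(e_{xy})]=0,
\]
which is exactly the first desired equality once one uses anti-symmetry of the bracket.

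For the second formula, take $a=e_x$, $b=e_{xy}$, $c=e_{yz}$. The same kind of computation yields $[a,b]=e_{xy}$, $[b,c]=e_{xz}$, and $[c,a]=0$, and the Hom-Jacobi identity becomes
\[
[e_{xz},\vf(e_x)]+[e_{xy},\vf(e_{yz})]=0,
\]
which rearranges to the second equality.

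There is no real obstacle here: the entire proof consists of choosing the two triples $(e_{xy},e_{yz},e_z)$ and $(e_x,e_{xy},e_{yz})$ so that one of the three terms in \cref{hom-jacobi} vanishes identically and the remaining two carry the information of the lemma. The only thing to check carefully is that the products involving the diagonal idempotents $e_x$ and $e_z$ with the off-diagonal basis elements vanish whenever required, which is immediate from $x<y<z$.
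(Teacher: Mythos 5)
Your proof is correct and uses exactly the two triples $(e_{xy},e_{yz},e_z)$ and $(e_x,e_{xy},e_{yz})$ that the paper's one-line proof invokes; the computations of the inner brackets and the vanishing of $[c,a]$ in each case all check out. This is the same argument, just with the details written out.
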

	\begin{proof}
		The equalities follow from \cref{hom-jacobi} applied to the triples $e_{xy}$, $e_{yz}$, $e_z$ and $e_x$, $e_{xy}$, $e_{yz}$, respectively.
	\end{proof}
	
	\begin{lemma}\label{not3non0comm}
		Let $\vf$ be a regular Hom-Lie structure on $I(X,K)$. If $e_{xy},e_{zw},e_{uv}\in \B$, then at least one of the commutators $[e_{xy},e_{zw}]$, $[e_{zw},e_{uv}]$ and $[e_{uv},e_{xy}]$ is zero.
	\end{lemma}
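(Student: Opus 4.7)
The statement is purely combinatorial---the Hom-Lie hypothesis on $\vf$ is not actually invoked in the argument I would give. I would argue by contradiction, supposing that all three commutators are nonzero, and derive a violation of the strict partial order on $X$.

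The preliminary observation is that for any $e_{pq},e_{p'q'}\in\B$, the commutator $[e_{pq},e_{p'q'}]=e_{pq}e_{p'q'}-e_{p'q'}e_{pq}$ is nonzero if and only if $q=p'$ (yielding $e_{pq'}$) or $q'=p$ (yielding $-e_{p'q}$), and the strictness $p<q$, $p'<q'$ excludes both holding simultaneously (since otherwise $p<q=p'<q'=p$). Writing $e_{xy}=e_{p_1q_1}$, $e_{zw}=e_{p_2q_2}$, $e_{uv}=e_{p_3q_3}$, each of the three nonzero commutators picks exactly one of two index-matching conditions, giving $2^3=8$ combined configurations.

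The plan is then a case analysis of these eight configurations. Two of them---call them \emph{consistent cyclic}, namely $q_1=p_2,\ q_2=p_3,\ q_3=p_1$ and its reverse $q_2=p_1,\ q_3=p_2,\ q_1=p_3$---force a closed chain $p_1<q_1=p_2<q_2=p_3<q_3=p_1$, contradicting antisymmetry of $<$. In each of the six remaining \emph{mixed} configurations, the three chosen equalities immediately force $p_i=q_i$ for some $i$, contradicting $p_i<q_i$; for instance, the combination $q_1=p_2$, $q_2=p_3$, $q_1=p_3$ yields $q_2=p_3=q_1=p_2$, whence $p_2=q_2$. I expect no substantive obstacle here: the argument is elementary bookkeeping on eight cases, each collapsing after a single substitution, and the only care needed is to enumerate the two alternatives per commutator without overlap and handle every combination.
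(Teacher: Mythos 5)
Your proposal is correct and takes essentially the same route as the paper: both are purely combinatorial case analyses of the index-matching conditions under which $[e_{pq},e_{p'q'}]\neq 0$, and neither actually invokes the Hom-Lie hypothesis on $\vf$. The only difference is organizational --- the paper supposes two of the brackets are nonzero and checks in four cases that the third one vanishes, whereas you suppose all three are nonzero and derive a contradiction with the strict order in eight cases.
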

	\begin{proof}
		Suppose that $[e_{xy},e_{zw}]\neq 0\neq [e_{zw},e_{uv}]$. Then $[e_{xy},e_{zw}]\neq 0$ implies either $x=w$ and $y\neq z$ or $y=z$ and $x\neq w$. Similarly,   $[e_{zw},e_{uv}]\neq 0$ implies either $z=v$ and $w\neq u$ or $w=u$ and $z\neq v$.  So we have to analyze four cases.
		
		If $x=w=u$ and $y\neq z\neq v$ or if $y=z=v$ and $x\neq w\neq u$, we obtain $e_{uv}=e_{xv}$ or $e_{uv}=e_{uy}$ and, therefore, $[e_{xy},e_{uv}]=0$.
		
		If $x=w\neq u$ and $y\neq z=v$, then $u<v=z<w=x<y$. Consequently $u\neq y$ and $v\neq x$ which implies $[e_{xy},e_{uv}]=0.$
		
		The last possibility is $x\neq w=u$ and $y=z\neq v$. In this case $x<y=z<w=u<v$ and again $u\neq y$ and $v\neq x$, also implying $[e_{xy},e_{uv}]=0$.
	\end{proof}
	
	\begin{proposition}\label{0=identity}
		Let $\vf\in\laut(I(X,K))$ be a regular Hom-Lie structure on $I(X,K)$. Then
		\begin{enumerate}
			\item $\0_\vf=\id_\B$;\label{0_vf=id}
			\item $\sg_\vf(x,y)=1$ unless $x\in\Min(X)$ and $y\in\Max(X)$.\label{sg(xy)=1}
		\end{enumerate}
	\end{proposition}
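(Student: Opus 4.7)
The plan is to exploit Lemma~\ref{basic-formulas} to pin down $\0_\vf$ and $\sg_\vf$ by comparing leading terms in the $J(I(X,K))$-adic filtration. Write $\vf(e_{xy})=\sg(x,y)\0(e_{xy})+R_{xy}$ with $R_{xy}\in J(I(X,K))^{l(x,y)+1}$ (as in the text following \cref{semidireto}) and $\vf(e_z)=\tau(e_z)+N_z$ with $\tau(e_z)\in D(X,K)$ and $N_z\in J(I(X,K))$; set $c_i^{(z)}:=\tau(e_z)(x_i,x_i)$. First I would note that $\vf$ preserves every power $J(I(X,K))^k$ (it is a Lie ideal), so $\0$ preserves the length on $\B$. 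Given $x<y<z$ in $X$ with $l(x,z)=l(x,y)+l(y,z)$, expanding the identity $[\vf(e_{xy}),e_{yz}]=[e_{xz},\vf(e_z)]$ and using $[R_{xy},e_{yz}]\in J(I(X,K))^{l(x,z)+1}$ together with $[e_{xz},N_z]\in J(I(X,K))^{l(x,z)+1}$, I arrive at the congruence
\begin{align*}
\sg(x,y)\,[\0(e_{xy}),e_{yz}]\equiv (c_z^{(z)}-c_x^{(z)})\,e_{xz}\pmod{J(I(X,K))^{l(x,z)+1}}.
\end{align*}

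For part~\ref{0_vf=id}, writing $\0(e_{xy})=e_{ab}$, the computation $[e_{ab},e_{yz}]=\dl_{by}e_{az}-\dl_{az}e_{yb}$ forces $(a,b)=(x,y)$ as the only configuration consistent with $\sg(x,y)\in K^*$: any alternative produces a nonzero term in $J^{l(x,z)}$ distinct from $e_{xz}$ (ruled out by linear independence modulo $J^{l(x,z)+1}$), or else---in the borderline cases where $e_{az}$ or $e_{yb}$ falls into $J^{l(x,z)+1}$---is eliminated by varying $z$, invoking the dual identity $[e_{xy},\vf(e_{yz})]=[\vf(e_x),e_{xz}]$, and applying \cref{not3non0comm} to a well-chosen triple of basis elements involving $\0(e_{xy})$. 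The corner case $x\in\Min(X)$, $y\in\Max(X)$, in which no suitable $z$ nor $w<x$ is available, is handled afterwards by combining the already-established fact that $\0$ fixes every non-corner element of $\B$ with the monotone-on-maximal-chains hypothesis applied to any maximal chain of length at least~$2$ through $x$ and $y$: matching the fixed non-corner positions on such a chain forces $\0(e_{xy})=e_{xy}$ as well.

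With~\ref{0_vf=id} in hand, part~\ref{sg(xy)=1} is quick. Since $\tau$ is a Lie automorphism and $\0=\id_\B$, applying $\tau$ to $[e_{xy},e_s]=(\dl_{ys}-\dl_{sx})e_{xy}$ yields $c_y^{(s)}-c_x^{(s)}=\dl_{ys}-\dl_{sx}$ for all $x<y$ and $s\in X$; telescoping produces $c_z^{(z)}-c_x^{(z)}=1$ for $x<y<z$, so the congruence above specializes to $\sg(x,y)=1$ whenever a suitable $z>y$ exists, in particular whenever $y\notin\Max(X)$. The analogous analysis of the dual identity gives $\sg(x,y)=1$ whenever $x\notin\Min(X)$, completing the argument. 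The main obstacle is the case analysis in part~\ref{0_vf=id}---in particular, the elimination of those borderline configurations where the leading term of $\sg(x,y)[\0(e_{xy}),e_{yz}]$ happens to vanish modulo $J^{l(x,z)+1}$---which requires a careful interplay between different choices of $z$, the dual identity, and \cref{not3non0comm}.
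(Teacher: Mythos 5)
Your plan for part \cref{sg(xy)=1} is close in spirit to the paper's proof (compare coefficients in the identities of \cref{basic-formulas} after peeling off the leading term), but your argument for part \cref{0_vf=id} has a genuine gap, and it occurs in the generic case rather than in exceptional ones. Writing $\0(e_{xy})=e_{ab}$, your congruence constrains $(a,b)$ only when $[e_{ab},e_{yz}]\ne 0$, i.e.\ only when $b=y$ or $a=z$. If $e_{ab}$ is disjoint from $\{x,y,z\}$, the left-hand side vanishes and the congruence collapses to $c_z^{(z)}=c_x^{(z)}$, which says nothing about $(a,b)$ and cannot be turned into a contradiction: the relation you would need, $c^{(z)}_q-c^{(z)}_p=1$, is obtained by applying $\tau$ to $[e_{xz},e_z]=e_{xz}$ and therefore holds for the pair $(p,q)$ with $\0(e_{xz})=e_{pq}$, not for $(x,z)$, until \cref{0_vf=id} is already known. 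Varying $z$, invoking the dual identity and citing \cref{not3non0comm} does not close this: \cref{not3non0comm} is an unconditional fact about commutators of basis elements and carries no information about $\0$, and there is no reason why $e_{ab}$ should interact with some $e_{yz}$ ($z>y$) or some $e_{wx}$ ($w<x$). The paper avoids the problem by applying \cref{hom-jacobi} to the triple $e_u,e_{uv},e_x$ built around the \emph{image} $e_{uv}:=\0(e_{xy})$: this yields $[e_{uv},\vf(e_x)]=0$, whereas the diagonal part $\wtl\vf(e_x)$ must satisfy $[e_{uv},\wtl\vf(e_x)]=\sg(x,y)\m[\wtl\vf(e_{xy}),\wtl\vf(e_x)]=-e_{uv}\ne 0$ when $x\notin\{u,v\}$; this forces $x\in\{u,v\}$ and, symmetrically, $y\in\{u,v\}$. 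That single change of viewpoint is what your plan is missing.

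Two further problems. First, the corner case: if $x\in\Min(X)$, $y\in\Max(X)$ and $l(x,y)=1$, then every maximal chain containing both $x$ and $y$ equals $\{x,y\}$ and has length $1$, so there is no chain of length at least $2$ to which you can apply monotonicity; yet $\0$ could a priori permute such elements (e.g.\ swap $e_{13}$ and $e_{23}$ in the poset with relations $1<3$, $2<3$), so this case genuinely requires an argument and your fallback does not reach it (the paper's triple $e_u,e_{uv},e_x$ handles it uniformly). Second, an element $z>y$ with $l(x,z)=l(x,y)+l(y,z)$ need not exist even when $y\notin\Max(X)$, since the longest chain from $x$ to $z$ may avoid $y$; as stated this undermines your derivation of \cref{sg(xy)=1}. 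This last point is repairable: compare coefficients at the single position $(x,z)$ (respectively at $(u,y)$ for $u<x$) instead of working modulo $J(I(X,K))^{l(x,z)+1}$, which is exactly what the paper does.
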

	\begin{proof}
		Write, for simplicity, $\0=\0_\vf$ and $\sg=\sg_\vf$.
		
		\cref{0_vf=id}. Let $x<y$ and $u<v$ such that $\0(e_{xy})=e_{uv}$. Assume that $x\not\in\{u,v\}$. Then applying \cref{hom-jacobi} to the triple $e_u,e_{uv},e_x$ we get $[e_{uv},\vf(e_x)]=0$. Write $\vf(e_x)=\wtl\vf(e_x)+\rho_x$, where $\rho_x\in J(I(X,K))$. Then
		\begin{align*}
			[e_{uv},\wtl\vf(e_x)]=-[e_{uv},\rho_x].
		\end{align*}
		However, $[e_{uv},\wtl\vf(e_x)]$ is a scalar multiple of $e_{uv}$ because $\wtl\vf(e_x)\in D(X,K)$, while $[e_{uv},\rho_x]\in J^{k+1}$, where $k=l(u,v)$. Hence, $[e_{uv},\wtl\vf(e_x)]=0$. On the other hand,
		\begin{align*}
			[e_{uv},\wtl\vf(e_x)]=\sg(x,y)\m[\wtl\vf(e_{xy}),\wtl\vf(e_x)]=-\sg(x,y)\m\wtl\vf(e_{xy}),
		\end{align*}
		a contradiction. Similarly, assuming $y\not\in\{u,v\}$ and applying \cref{hom-jacobi} to the triple $e_u,e_{uv},e_y$ we get $[e_{uv},\wtl\vf(e_y)]=0$, which contradicts
		\begin{align*}
			[e_{uv},\wtl\vf(e_y)]=\sg(x,y)\m[\wtl\vf(e_{xy}),\wtl\vf(e_y)]=\sg(x,y)\m\wtl\vf(e_{xy}).
		\end{align*}
		Thus, $e_{xy}=e_{uv}$.
		
		\cref{sg(xy)=1}. Assume that $x\not\in\Min(X)$, so there exists $u<x$. Using \cref{basic-formulas} we have 
		\begin{align}\label{[e_ux_vf(e_xy)]=[vf(e_u)_e_uy]}
			[e_{ux},\vf(e_{xy})]=[\vf(e_u),e_{uy}].  
		\end{align}
		Since $\0=\id_\B$ by \cref{0_vf=id}, then $\vf(e_{xy})=\sg(x,y)e_{xy}+\rho_{xy}$, where $\rho_{xy}=\sum\rho_{xy}(a,b)e_{ab}$ with $a\le x<y\le b$ and $l(x,y)<l(a,b)$. Write also $\vf(e_u)=\wtl\vf(e_u)+\rho_u$, where $\rho_u\in J(I(X,K))$. Then \cref{[e_ux_vf(e_xy)]=[vf(e_u)_e_uy]} gives
		\begin{align}\label{[e_ux_sg(x_y)e_xy]-[wtl-vf(e_u)_e_uy]=-[e_ux_rho_xy]+[rho_u_e_uy]}
			[e_{ux},\sg(x,y)e_{xy}]-[\wtl\vf(e_u),e_{uy}]=-[e_{ux},\rho_{xy}]+[\rho_u,e_{uy}].
		\end{align}
		Now, 
		\begin{align*}
			[e_{ux},\sg(x,y)e_{xy}]&=\sg(x,y)[e_{ux},e_{xy}]=\sg(x,y)e_{uy},\\
			[\wtl\vf(e_u),e_{uy}]&=[\wtl\vf(e_u),\sg(u,y)\m\wtl\vf(e_{uy})]=\sg(u,y)\m\wtl\vf([e_u,e_{uy}])\\
			&=\sg(u,y)\m\wtl\vf(e_{uy})=\sg(u,y)\m \sg(u,y)e_{uy}=e_{uy}.
		\end{align*}
		Hence, the left-hand side of \cref{[e_ux_sg(x_y)e_xy]-[wtl-vf(e_u)_e_uy]=-[e_ux_rho_xy]+[rho_u_e_uy]} equals $(\sg(x,y)-1)e_{uy}$. We are going to show that the right-hand side of \cref{[e_ux_sg(x_y)e_xy]-[wtl-vf(e_u)_e_uy]=-[e_ux_rho_xy]+[rho_u_e_uy]} is zero at $(u,y)$. Let $l(u,y)=k$. Then $[\rho_u,e_{uy}]\in J^{k+1}$, so it is zero at $(u,y)$. Regarding $[e_{ux},\rho_{xy}]$, we have $e_{ux}\rho_{xy}=\sum_{y<b}\rho_{xy}(x,b)e_{ub}$, which is also zero at $(u,y)$. It follows that $\sg(x,y)=1$. Similarly, if $y\not\in\Max(X)$, then there exists $v>y$, and $[\vf(e_{xy}),e_{yv}]=[e_{xv},\vf(e_v)]$ gives $\sg(x,y)=1$.
	\end{proof}
	
	\begin{proposition}\label{0=id-and=sg=1=>vf-Hom-Lie}
		Let $\vf\in\wtl\laut(I(X,K))$ satisfying \cref{0_vf=id,sg(xy)=1} of \cref{0=identity}. Then $\vf$ is a regular Hom-Lie structure on $I(X,K)$.
	\end{proposition}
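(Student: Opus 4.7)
The plan is to verify the Hom-Jacobi identity~\cref{hom-jacobi} for $\vf$ on every triple of basis elements of $I(X,K)$; by~\cref{psi-vf-psi-inv-Hom-Lie}(i) this suffices. Observe that Hom-Jacobi is \emph{linear} in the twist map and that $\id$ trivially satisfies it (it is just the ordinary Jacobi identity for the commutator bracket). Hence, setting $\eta := \vf - \id$, it is enough to show that $\eta$ satisfies Hom-Jacobi on basis triples.

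I will show that $\eta$ sends every basis element of $I(X,K)$ into $W := K\dl + Z(J(I(X,K)))$. For $x<y$, the hypothesis $\0_\vf = \id_\B$ gives $\vf(e_{xy}) = \sg_\vf(x,y)\, e_{xy}$, whence $\eta(e_{xy}) = (\sg_\vf(x,y) - 1)\, e_{xy}$; by the hypothesis on $\sg_\vf$ this vanishes unless $x \in \Min(X)$ and $y \in \Max(X)$, in which case $e_{xy}$ lies in $Z(J(I(X,K)))$ by the description recalled in \cref{sec-prelim}. For a diagonal generator $e_x$, applying the Lie automorphism property of $\vf$ to $[e_x, e_{uv}] = \dl_{xu}e_{xv} - \dl_{xv}e_{ux}$ for each $u < v$, and using that $\vf$ scales $e_{uv}$ by $\sg_\vf(u,v) \in K^*$, I obtain $[\vf(e_x), e_{uv}] = [e_x, e_{uv}]$. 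Thus $\eta(e_x) = \vf(e_x) - e_x$ commutes with every basis element of $J(I(X,K))$, i.e.\ centralizes $J(I(X,K))$. A short direct computation (splitting an element centralizing $J(I(X,K))$ into its diagonal and nilpotent parts, invoking the connectedness of $X$ for the former and the characterization of $Z(J(I(X,K)))$ as the bilateral annihilator of $J(I(X,K))$ for the latter) then identifies this centralizer with $W$, so $\eta(e_x) \in W$ as well.

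To conclude, observe that for every pair of basis elements $a, b$ the commutator $[a, b]$ lies in $J(I(X,K))$ (trivial if $a, b \in D(X,K)$; otherwise a direct inspection of the commutator formulas $[e_x,e_{uv}] = \dl_{xu}e_{xv} - \dl_{xv}e_{ux}$ and $[e_{xy},e_{uv}] = \dl_{yu}e_{xv} - \dl_{xv}e_{uy}$). Since $K\dl \subseteq Z(I(X,K))$ and $Z(J(I(X,K)))$ annihilates $J(I(X,K))$ on both sides, every element of $W$ commutes with every element of $J(I(X,K))$. Therefore $[[a, b], \eta(c)] = 0$ for every basis triple $a, b, c$, and the cyclic sum in Hom-Jacobi for $\eta$ vanishes term by term. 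I anticipate no genuine obstacle; the only mildly technical ingredient is the centralizer computation outlined above.
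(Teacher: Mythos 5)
Your proof is correct, and it takes a genuinely different and substantially shorter route than the paper's. The paper verifies the Hom-Jacobi identity by an exhaustive case analysis over basis triples (organized by how many of $a,b,c$ are diagonal and by the coincidence pattern of their indices --- about fifteen cases). You instead exploit the linearity of the left-hand side of \cref{hom-jacobi} in the twist map: writing $\vf=\id+\eta$, the identity for $\id$ is ordinary Jacobi, so everything reduces to showing $[[a,b],\eta(c)]=0$ on basis triples, which follows once $[a,b]\in J(I(X,K))$ and $\eta(c)$ centralizes $J(I(X,K))$. Your verification of the latter is sound: since $\vf$ is \emph{elementary} with $\0_\vf=\id_\B$, one has $\vf(e_{xy})=\sg_\vf(x,y)e_{xy}$ exactly (no higher-order correction), so hypothesis \cref{sg(xy)=1} puts $\eta(e_{xy})$ into $Z(J(I(X,K)))$, and the computation $[\vf(e_x),e_{uv}]=[e_x,e_{uv}]$ (which uses only \cref{0_vf=id}) shows $\eta(e_x)$ commutes with $J(I(X,K))$. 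One small streamlining: the identification of the centralizer of $J(I(X,K))$ with $K\dl+Z(J(I(X,K)))$, which you only sketch, is not actually needed --- all you use downstream is that $\eta(c)$ commutes with $J(I(X,K))$, and you have already established that directly for both kinds of basis elements. Your argument is more conceptual and makes transparent exactly where each hypothesis enters (it also foreshadows the decomposition $\vf=M_\sg+\nu$ of \cref{vf=xi_bt-circ-M_sg+nu}); the paper's case analysis is more elementary and self-contained but much longer.
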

	\begin{proof}
		Assume \cref{0_vf=id,sg(xy)=1} and let $\0=\0_\vf$ and $\sg=\sg_\vf$. We will first prove \cref{hom-jacobi} for $a=e_{xy}$, $b=e_{zw}$ and $c=e_{uv}$, where $x<y$, $z<w$ and $u<v$.
		
		\textit{Case 1.} Exactly one of the products $[e_{xy},e_{zw}]$, $[e_{zw},e_{uv}]$ and $[e_{uv},e_{xy}]$ is non-zero, say $[e_{xy},e_{zw}]\ne 0$ and $[e_{zw},e_{uv}]=[e_{uv},e_{xy}]=0$. We thus need to prove that $[[e_{xy},e_{zw}],\vf(e_{uv})]=0$. Since $\vf(e_{uv})=\sg(u,v)e_{uv}$, the result follows from the usual Jacobi identity for the triple $e_{xy}$, $e_{zw}$, $e_{uv}$.
		
		\textit{Case 2.} Exactly two of the products $[e_{xy},e_{zw}]$, $[e_{zw},e_{uv}]$ and $[e_{uv},e_{xy}]$ are non-zero, say $[e_{xy},e_{zw}]\ne 0$, $[e_{zw},e_{uv}]\ne 0$ and $[e_{uv},e_{xy}]=0$. Since $\vf(e_{xy})=\sg(x,y)e_{xy}$ and $\vf(e_{uv})=\sg(u,v)e_{uv}$, we need to prove that 
		\begin{align*}
			\sg(u,v)[[e_{xy},e_{zw}],e_{uv}]+\sg(x,y)[[e_{zw},e_{uv}],e_{xy}]=0.
		\end{align*}
		Observe that $[[e_{xy},e_{zw}],e_{uv}]+[[e_{zw},e_{uv}],e_{xy}]=0$ by the usual Jacobi identity. It is thus enough to prove that $\sg(u,v)=\sg(x,y)$. It follows from $[e_{xy},e_{zw}]\ne 0$ that either $y=z<w$, in which case $y\not\in\Max(X)$, or $z<w=x$, in which case $x\not\in\Min(X)$. Hence, $\sg(x,y)=1$. Similarly, $[e_{zw},e_{uv}]\ne 0$ yields $z<w=u$, in which case $u\not\in\Min(X)$, or $v=z<w$, in which case $v\not\in\Max(X)$. Hence, $\sg(u,v)=1$ as well.
		
		Now let us prove \cref{hom-jacobi} for $a=e_{xy}$, $b=e_{zw}$ and $c=e_{uv}$, where some of the elements $a,b,c$ belong to $D(X,K)$.
		
		\textit{Case 1.} Exactly one of the elements $a,b,c$ belongs to $D(X,K)$, say $x=y$, $z<w$ and $u<v$.
		
		\textit{Case 1.1.} $x\not\in\{z,w\}\cup\{u,v\}$. Then we need to prove that $[[e_{zw},e_{uv}],\vf(e_x)]=0$. If $[e_{zw},e_{uv}]=0$, then there is nothing to prove. If $w=u$, then 
		\begin{align*}
			[[e_{zw},e_{uv}],\vf(e_x)]=[e_{zv},\vf(e_x)]=\sg(z,v)\m\vf([e_{zv},e_x]),
		\end{align*}
		where $[e_{zv},e_x]=0$ because $x\not\in\{z,v\}$. And if $z=v$, then
		\begin{align*}
			[[e_{zw},e_{uv}],\vf(e_x)]=-[e_{uw},\vf(e_x)]=-\sg(u,w)\m\vf([e_{uw},e_x]),
		\end{align*}
		where $[e_{uw},e_x]=0$ because $x\not\in\{u,w\}$.
		
		\textit{Case 1.2.} $x\in\{z,w\}\setminus\{u,v\}$. Then we need to prove that
		\begin{align}\label{[[e_x_e_zw]_vf(e_uv)]+[[e_zw_e_uv]_vf(e_x)]=0}
			[[e_x,e_{zw}],\vf(e_{uv})]+[[e_{zw},e_{uv}],\vf(e_x)]=0.
		\end{align}
		
		\textit{Case 1.2.1.} $x=z\not\in\{u,v\}$. Then 
		\begin{align*}
			[[e_x,e_{zw}],\vf(e_{uv})]=[e_{xw},\vf(e_{uv})]=\sg(u,v)[e_{xw},e_{uv}].
		\end{align*}
		If $[e_{xw},e_{uv}]=0$, there is nothing to prove. Otherwise $[e_{xw},e_{uv}]=e_{xv}$ because $x\ne v$. Since $x<w=u$ in this case, we have $u\not\in\Min(X)$ and consequently $\sg(u,v)=1$. Thus, $[[e_x,e_{zw}],\vf(e_{uv})]=e_{xv}$. On the other hand,
		\begin{align*}
			[[e_{zw},e_{uv}],\vf(e_x)]&=[e_{xv},\vf(e_x)]=\sg(x,v)\m[\vf(e_{xv}),\vf(e_x)]=\sg(x,v)\m\vf([e_{xv},e_x])\\
			&=-\sg(x,v)\m\vf(e_{xv})=-\sg(x,v)\m\sg(x,v)e_{xv}=-e_{xv}.
		\end{align*}
		Thus, \cref{[[e_x_e_zw]_vf(e_uv)]+[[e_zw_e_uv]_vf(e_x)]=0} holds.
		
		\textit{Case 1.2.2.} $x=w\not\in\{u,v\}$. Then 
		\begin{align*}
			[[e_x,e_{zw}],\vf(e_{uv})]=-[e_{zx},\vf(e_{uv})]=-\sg(u,v)[e_{zx},e_{uv}].
		\end{align*}
		If $[e_{zx},e_{uv}]=0$, there is nothing to prove. Otherwise $[e_{zx},e_{uv}]=-e_{ux}$ because $x\ne u$. Since $v=z<x$ in this case, we have $v\not\in\Max(X)$ and consequently $\sg(u,v)=1$. Thus, $[[e_x,e_{zw}],\vf(e_{uv})]=e_{ux}$. On the other hand,
		\begin{align*}
			[[e_{zw},e_{uv}],\vf(e_x)]&=-[e_{ux},\vf(e_x)]=-\sg(u,x)\m[\vf(e_{ux}),\vf(e_x)]=-\sg(u,x)\m\vf([e_{ux},e_x])\\
			&=-\sg(u,x)\m\vf(e_{ux})=-\sg(u,x)\m\sg(u,x)e_{ux}=-e_{ux}.
		\end{align*}
		Thus, \cref{[[e_x_e_zw]_vf(e_uv)]+[[e_zw_e_uv]_vf(e_x)]=0} holds.
		
		\textit{Case 1.3.} $x\in\{u,v\}\setminus\{z,w\}$. Then we need to prove that
		\begin{align*}
			[[e_{zw},e_{uv}],\vf(e_x)]+[[e_{uv},e_x],\vf(e_{zw})]=0.
		\end{align*}
		This case is the same as Case 1.2 up to the interchange between $e_{uv}$ and $e_{zw}$.
		
		\textit{Case 1.4.} $x\in\{u,v\}\cap\{z,w\}$.
		
		\textit{Case 1.4.1.} $x=u=z$. Then we need to prove that
		\begin{align*}
			0&=[[e_x,e_{xw}],\vf(e_{xv})]+[[e_{xv},e_x],\vf(e_{xw})]=\sg(x,v)[e_{xw},e_{xv}]-\sg(x,w)[e_{xv},e_{xw}],
		\end{align*}
		which is clearly satisfied because $[e_{xw},e_{xv}]=[e_{xv},e_{xw}]=0$.
		
		\textit{Case 1.4.2.} $x=u=w$. Then we need to prove that
		\begin{align*}
			0&=[[e_x,e_{zx}],\vf(e_{xv})]+[[e_{zx},e_{xv}],\vf(e_x)]+[[e_{xv},e_x],\vf(e_{zx})]\\
			&=-[e_{zx},\vf(e_{xv})]+[e_{zv},\vf(e_x)]-[e_{xv},\vf(e_{zx})]\\
			&=[e_{zv},\vf(e_x)]+(\sg(z,x)-\sg(x,v))e_{zv}.
		\end{align*}
		Since $z<x<v$, we have $x\not\in\Min(X)\cup\Max(X)$, whence $\sg(z,x)=\sg(x,v)=1$. Moreover,
		\begin{align*}
			[e_{zv},\vf(e_x)]=\sg(z,v)\m[\vf(e_{zv}),\vf(e_x)]=\sg(z,v)\m\vf([e_{zv},e_x])=0,
		\end{align*}
		because $[e_{zv},e_x]=0$.
		
		\textit{Case 1.4.3.} $x=v=z$. Then we need to prove that
		\begin{align*}
			0&=[[e_x,e_{xw}],\vf(e_{ux})]+[[e_{xw},e_{ux}],\vf(e_x)]+[[e_{ux},e_x],\vf(e_{xw})]\\
			&=[e_{xw},\vf(e_{ux})]-[e_{uw},\vf(e_x)]+[e_{ux},\vf(e_{xw})]\\
			&=-[e_{uw},\vf(e_x)]+(\sg(x,w)-\sg(u,x))e_{uw}.
		\end{align*}
		So, this case is the same as Case 1.4.2.
		
		\textit{Case 1.4.4.} $x=v=w$. Then we need to prove that
		\begin{align*}
			0&=[[e_x,e_{zx}],\vf(e_{ux})]+[[e_{ux},e_x],\vf(e_{zx})]=-\sg(u,x)[e_{zx},e_{ux}]+\sg(z,x)[e_{ux},e_{zx}],
		\end{align*}
		which is clearly satisfied because $[e_{zx},e_{ux}]=[e_{ux},e_{zx}]=0$.
		
		\textit{Case 2.} Exactly two of the elements $a,b,c$ belong to $D(X,K)$, say $x=y$, $z=w$ and $u<v$. We assume that $x\ne z$, since the case $x=z$ is obvious.
		
		\textit{Case 2.1.} $\{x,z\}\cap\{u,v\}=\emptyset$. Then \cref{hom-jacobi} is trivial.
		
		\textit{Case 2.2.} $x=u$, $z\not\in\{u,v\}$. Then we need to prove that $0=[[e_{xv},e_x],\vf(e_z)]=-[e_{xv},\vf(e_z)]$, which is true because $[e_{xv},\vf(e_z)]=\sg(x,v)\m\vf([e_{xv},e_z])$.
		
		\textit{Case 2.3.} $x=v$, $z\not\in\{u,v\}$. Then we need to prove that $0=[[e_{ux},e_x],\vf(e_z)]=[e_{ux},\vf(e_z)]$, which is true because $[e_{ux},\vf(e_z)]=\sg(u,x)\m\vf([e_{ux},e_z])$.
		
		\textit{Case 2.4.} $z=u$, $x\not\in\{u,v\}$. Then we need to prove that $0=[[e_z,e_{zv}],\vf(e_x)]=[e_{zv},\vf(e_x)]$, which is Case 2.2.
		
		\textit{Case 2.5.} $z=v$, $x\not\in\{u,v\}$. Then we need to prove that $0=[[e_z,e_{uz}],\vf(e_x)]=-[e_{uz},\vf(e_x)]$, which is Case 2.3.
		
		\textit{Case 2.6.} $x=u$ and $z=v$. Then we need to prove that
		\begin{align*}
			0=[[e_z,e_{xz}],\vf(e_x)]+[[e_{xz},e_x],\vf(e_z)]=-[e_{xz},\vf(e_x)]-[e_{xz},\vf(e_z)].
		\end{align*}
		But
		\begin{align*}
			[e_{xz},\vf(e_x)]&=\sg(x,z)\m\vf([e_{xz},e_x])=-\sg(x,z)\m\vf(e_{xz})=-e_{xz},\\
			[e_{xz},\vf(e_z)]&=\sg(x,z)\m\vf([e_{xz},e_z])=\sg(x,z)\m\vf(e_{xz})=e_{xz}.
		\end{align*}
		
		\textit{Case 2.7.} $x=v$ and $z=u$. Then we need to prove that
		\begin{align*}
			0=[[e_z,e_{zx}],\vf(e_x)]+[[e_{zx},e_x],\vf(e_z)]=[e_{zx},\vf(e_x)]+[e_{zx},\vf(e_z)],
		\end{align*}
		which is Case 2.6.
		
		\textit{Case 3.} All the elements $a,b,c$ belong to $D(X,K)$. This case is trivial, since $D(X,K)$ is commutative.
	\end{proof}
	
	The following two corollaries are immediate consequences of \cref{0=identity,0=id-and=sg=1=>vf-Hom-Lie}.
	
	\begin{corollary}\label{vf-Hom_lie<=>0=id-and-sg=1}
		Let $\vf\in\wtl\laut(I(X,K))$. Then $\vf$ is a regular Hom-Lie structure on $I(X,K)$ if and only if $\vf$ satisfies \cref{0_vf=id,sg(xy)=1} of \cref{0=identity}. 
	\end{corollary}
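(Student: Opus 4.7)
My plan is to observe that this corollary is a direct consequence of the two preceding propositions, requiring no additional work beyond invoking them; the statement is essentially a packaging of \cref{0=identity,0=id-and=sg=1=>vf-Hom-Lie} into a single biconditional specialized to elementary Lie automorphisms.

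For the ``only if'' direction, since $\wtl\laut(I(X,K))$ is by definition a subgroup of $\laut(I(X,K))$, any $\vf\in\wtl\laut(I(X,K))$ is in particular a Lie automorphism of $I(X,K)$. If such a $\vf$ is a regular Hom-Lie structure, then \cref{0=identity} applies and directly yields both $\0_\vf=\id_\B$ and the restriction on $\sg_\vf$; that is, conditions \cref{0_vf=id,sg(xy)=1}.

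For the ``if'' direction, the hypotheses of \cref{0=id-and=sg=1=>vf-Hom-Lie} are precisely $\vf\in\wtl\laut(I(X,K))$ satisfying \cref{0_vf=id,sg(xy)=1}, and its conclusion is that $\vf$ is a regular Hom-Lie structure on $I(X,K)$. So this direction is a verbatim application of that proposition.

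I expect no obstacle at this point: the substantive work has already been carried out in \cref{0=identity} (the necessity argument, using the Hom-Jacobi identity applied to carefully chosen triples and the filtration by powers of the Jacobson radical) and in \cref{0=id-and=sg=1=>vf-Hom-Lie} (the sufficiency argument, via the exhaustive case analysis on basis triples from $\B\cup\{e_x:x\in X\}$). The only conceptual point to note is that the inclusion $\wtl\laut(I(X,K))\sst\laut(I(X,K))$ is immediate from the definition, so the hypothesis of \cref{0=identity} is satisfied and the biconditional follows at once.
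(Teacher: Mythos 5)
Your proposal is correct and matches the paper exactly: the paper states that this corollary is an immediate consequence of \cref{0=identity} (giving necessity, since $\wtl\laut(I(X,K))\sst\laut(I(X,K))$) and \cref{0=id-and=sg=1=>vf-Hom-Lie} (giving sufficiency). No further comment is needed.
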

	
	\begin{corollary}\label{vf-Hom-Lie=>wtl-vf-Hom-Lie}
		If $\vf\in\laut(I(X,K))$ is a regular Hom-Lie structure on $I(X,K)$, then so is $\wtl\vf$.
	\end{corollary}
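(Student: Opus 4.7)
My plan is to derive the corollary as an essentially immediate consequence of the two preceding propositions, using the fact that the factors $\0$ and $\sg$ attached to $\vf$ via the semidirect decomposition are by definition those of its elementary part $\wtl\vf$.

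First I would invoke \cref{semidireto} to write $\vf=\xi_\bt\circ\wtl\vf$ for a unique $\bt\in I(X,K)^*$ with $\bt_D=\dl$ and a unique $\wtl\vf=\tau_{\0,\sg,c}\in\wtl\laut(I(X,K))$. By the very definition of the invariants preceding \cref{psi-vf-psi-inv-Hom-Lie} (i.e.\ $\0_\vf:=\0$ and $\sg_\vf:=\sg$), we have $\0_{\wtl\vf}=\0_\vf$ and $\sg_{\wtl\vf}=\sg_\vf$, because $\wtl\vf$ decomposes through \cref{semidireto} as $\xi_\dl\circ\tau_{\0,\sg,c}$.

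Next, since $\vf$ is assumed to be a regular Hom-Lie structure, \cref{0=identity} gives $\0_\vf=\id_\B$ and $\sg_\vf(x,y)=1$ whenever $(x,y)\notin\Min(X)\times\Max(X)$. By the previous paragraph these equalities hold for $\0_{\wtl\vf}$ and $\sg_{\wtl\vf}$ as well, so $\wtl\vf\in\wtl\laut(I(X,K))$ satisfies conditions \cref{0_vf=id,sg(xy)=1} of \cref{0=identity}. Applying \cref{0=id-and=sg=1=>vf-Hom-Lie} (equivalently, \cref{vf-Hom_lie<=>0=id-and-sg=1}) to $\wtl\vf$ then yields that $\wtl\vf$ is a regular Hom-Lie structure on $I(X,K)$, which is the desired conclusion.

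There is no real obstacle here; the only point requiring care is the bookkeeping observation that the data $(\0_\vf,\sg_\vf)$ extracted from $\vf$ coincides with the data $(\0_{\wtl\vf},\sg_{\wtl\vf})$ extracted from $\wtl\vf$, which follows directly from the uniqueness clause of \cref{semidireto} together with the definitions recalled just before \cref{psi-vf-psi-inv-Hom-Lie}.
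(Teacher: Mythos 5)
Your argument is correct and is precisely the route the paper intends: it labels this corollary an immediate consequence of \cref{0=identity} and \cref{0=id-and=sg=1=>vf-Hom-Lie}, exactly via the observation that $\0_{\wtl\vf}=\0_\vf$ and $\sg_{\wtl\vf}=\sg_\vf$ by definition of these invariants. Nothing is missing.
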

	
	The converse of \cref{vf-Hom-Lie=>wtl-vf-Hom-Lie} does not always hold, as the next remark shows.
	\begin{remark}
		Let $l(X)>1$ and fix $x<y$ with $x\not\in\Min(X)$. Define $\vf=\xi_{\bt}$, where $\bt=\dl+e_{xy}$. Then $\vf\in\inn_1(I(X,K))$ and for all $u<x$ we have
		\begin{align*}
			&[[e_{y},e_{u}],\vf(e_{ux})]+[[e_{u},e_{ux}],\vf(e_{y})]+[[e_{ux},e_{y}],\vf(e_{u})]\\
			&\quad=[e_{ux},(\dl+e_{xy})e_{y}(\dl-e_{xy})]=[e_{ux},(e_{y}+e_{xy})(\dl-e_{xy})]\\
			&\quad=[e_{ux},e_{y}+e_{xy}]=e_{uy},
		\end{align*}
		so $\vf$ is not a Hom-Lie structure, but $\wtl\vf=\id$ is.
	\end{remark}
	
	\begin{proposition}\label{xi-Hom-Lie<=>rho-in-Z}
		Let $\bt=\dl+\rho$, where $\rho\in J(I(X,K))$. Then $\xi_\bt$ is a regular Hom-Lie structure on $I(X,K)$ if and only if $\rho\in Z(J(I(X,K)))$.
	\end{proposition}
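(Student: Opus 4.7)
The plan is to prove both directions separately, exploiting the explicit formula $\xi_\bt(e_y)(a, b) = \bt(a, y)\bt\m(y, b)$, valid for $a \le y \le b$, where $\bt = \dl + \rho$.

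For the sufficiency $(\Leftarrow)$, I would assume $\rho \in Z(J(I(X,K)))$. Since this center coincides with the bilateral annihilator of $J(I(X,K))$ (recalled in \cref{sec-prelim}), one has $\rho J(I(X,K)) = J(I(X,K))\rho = 0$; in particular $\rho^2 = 0$, so $\bt\m = \dl - \rho$. A short computation with the decomposition $f = f_D + f_J$ shows $\rho f \rho = 0$ for every $f$, whence $\xi_\bt(f) = f + [\rho, f]$. The commutator $[\rho, f]$ vanishes when $f \in J(I(X,K))$, and for $f \in D(X,K)$ the annihilator property yields $[\rho, f] \in Z(J(I(X,K)))$. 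Since $[a, b] \in J(I(X,K))$ for any $a, b \in I(X,K)$ (because $D(X,K)$ is commutative), and since any element of $Z(J(I(X,K)))$ commutes with $J(I(X,K))$, the Hom-Jacobi identity reduces to
\[\sum_{\text{cyc}} [[a,b], \xi_\bt(c)] = \sum_{\text{cyc}} [[a,b], c] + \sum_{\text{cyc}} [[a,b], [\rho, c]] = 0 + 0 = 0,\]
where the first sum vanishes by the usual Jacobi identity and each term of the second vanishes because $[a, b] \in J(I(X,K))$ and $[\rho, c] \in Z(J(I(X,K)))$.

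For the necessity $(\Rightarrow)$, I would apply the Hom-Jacobi identity to the triple $(e_y, e_u, e_{ux})$ with $u < x$ and $y \notin \{u, x\}$. The brackets $[e_y, e_u]$ and $[e_{ux}, e_y]$ vanish while $[e_u, e_{ux}] = e_{ux}$, so the identity collapses to $[e_{ux}, \xi_\bt(e_y)] = 0$. Using the explicit formula for $\xi_\bt(e_y)$, the matrix entry of this bracket at position $(u, y)$ with $y > x$ equals $\xi_\bt(e_y)(x, y) = \bt(x, y) = \rho(x, y)$, so $\rho(x, y) = 0$; hence $\rho(x, y) \neq 0$ implies $x \in \Min(X)$. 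The entry at position $(y, u)$ with $y < u$ equals $-\xi_\bt(e_y)(y, u) = -\bt\m(y, u)$, so $\bt\m(y, u) = 0$ whenever $u \notin \Max(X)$ (which is what allows the choice of some $x > u$). Finally, reading the $(y, u)$-entry of $\bt \bt\m = \dl$, namely $\bt\m(y, u) + \sum_{y < j < u} \rho(y, j)\bt\m(j, u) + \rho(y, u) = 0$, and substituting the just-derived vanishing of $\bt\m(\cdot, u)$ on $\{j : j < u\}$, I conclude $\rho(y, u) = 0$, so $\rho(y, u) \neq 0$ implies $u \in \Max(X)$. Together the two implications place $\rho$ in $\Span_K\{e_{xy} : x \in \Min(X),\, y \in \Max(X)\} = Z(J(I(X,K)))$.

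The main difficulty, I expect, lies in the forward direction: both the $\Min$ and $\Max$ constraints must be extracted from the \emph{same} cleverly chosen Hom-Jacobi triple by inspecting different matrix entries, and the vanishing of $\bt\m(y, u)$ must be propagated to $\rho(y, u)$ via the identity $\bt \bt\m = \dl$. The backward direction is comparatively routine once one recognizes the structural facts $[I(X,K), I(X,K)] \subseteq J(I(X,K))$ and $[\rho, I(X,K)] \subseteq Z(J(I(X,K)))$, which together trivialize the cyclic Hom-Jacobi sum.
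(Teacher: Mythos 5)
Your proof is correct, up to one mislabeled matrix position: in the necessity argument, the entry of $[e_{ux},\xi_\bt(e_y)]$ that equals $-\xi_\bt(e_y)(y,u)=-\bt\m(y,u)$ sits at position $(y,x)$, not $(y,u)$ (the $(y,u)$-entry vanishes identically because $e_{ux}(c,u)=\dl_{cu}\dl_{ux}=0$); the value you extract and everything downstream of it is right. The sufficiency direction is essentially the paper's argument: both reduce the Hom-Jacobi identity to the ordinary Jacobi identity by observing that $\xi_\bt$ perturbs its argument only by an element of $Z(J(I(X,K)))$, which annihilates the commutators $[a,b]\in J(I(X,K))$. The necessity direction, however, is organized differently. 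The paper uses two separate families of triples: first $e_u,e_{ux},e_y$ to show that $\rho$ is supported on pairs with minimal first coordinate, from which it deduces $\rho^2=0$ and hence the explicit inverse $\bt\m=\dl-\rho$, which is then fed into a second application of Hom-Jacobi to the triple $e_{yv},e_v,e_x$ to force the second coordinate to be maximal. You instead squeeze both constraints out of the single triple shape $(e_y,e_u,e_{ux})$ by reading off two different entries --- obtaining $\rho(x,y)=0$ for $x\notin\Min(X)$ directly, and $\bt\m(y,u)=0$ for $u\notin\Max(X)$, which you then transfer to $\rho(y,u)=0$ via the $(y,u)$-entry of $\bt\bt\m=\dl$. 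This avoids the intermediate computation of $\bt\m$ at the cost of some extra bookkeeping with the convolution identity; both routes are equally rigorous.
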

	\begin{proof}
		\textit{The ``only if'' part.} Let $\xi_\bt$ be a regular Hom-Lie structure on $I(X,K)$. Take $x<y$ such that $\rho(x,y)\ne 0$ and assume that $x\not\in\Min(X)$. Then there exists $u<x$. Applying \cref{hom-jacobi} to the triple $e_u,e_{ux},e_y$ we get 
		\begin{align*}
			0&=[e_{ux},(\dl+\rho)e_y\bt\m]=e_{ux}(\dl+\rho)e_y\bt\m=\rho(x,y)e_{uy}\bt\m,
		\end{align*}
		whence $e_{uy}=0$, a contradiction. Consequently, $\rho$ is a linear combination of $e_{xy}$, where $x\in\Min(X)$. It follows that $\rho^2=0$, because $e_{xy}e_{uv}=0$ for all $x<y$ and $u<v$ with $x,u\in\Min(X)$. Therefore, $\bt\m=\dl-\rho$. Now if $\rho(x,y)\ne 0$ and $y\not\in\Max(X)$, then taking $v>y$ and applying \cref{hom-jacobi} to the triple $e_{yv},e_v,e_x$ we get 
		\begin{align*}
			0&=[e_{yv},\bt e_x(\dl-\rho)]=-\bt e_x(\dl-\rho)e_{yv}=\rho(x,y)\bt e_{xv},
		\end{align*}
		a contradiction. Thus, $\rho$ is a linear combination of $e_{xy}$, where $x\in\Min(X)$ and $y\in\Max(X)$, that is, $\rho\in Z(J(I(X,K)))$.
		
		\textit{The ``if'' part.} Assume that $\rho\in Z(J(I(X,K)))$. Then $\rho^2=0$, so $\bt\m=\dl-\rho$ and $\xi_\bt(e_{xy})=e_{xy}+\rho'_{xy}$ for all $x\le y$, where $\rho'_{xy}\in J(I(X,K))$. In fact, $\rho'_{xy}\in Z(J(I(X,K)))$, because $\rho\in Z(J(I(X,K)))$, which is an ideal of $I(X,K)$. Therefore,
		\begin{align*}
			[[f,g],\xi_\bt(e_{xy})]=[[f,g],e_{xy}+\rho'_{xy}]=[[f,g],e_{xy}]
		\end{align*}
		for all $x\le y$ and $f,g\in I(X,K)$, because $[f,g]\in J(I(X,K))$. Hence, the Hom-Jacobi identity for $\xi_\bt$ reduces to the usual Jacobi identity in $I(X,K)$.
	\end{proof}
	
	\begin{proposition}\label{vf-Hom-Lie<=>wtl-vf-and-xi-Hom-Lie}
		Let $\vf=\xi_\bt\circ\wtl\vf\in\laut(I(X,K))$, where $\xi_\bt\in\inn_1(I(X,K))$. Then $\vf$ is a regular Hom-Lie structure on $I(X,K)$ if and only if so are $\wtl\vf$ and $\xi_\bt$.
	\end{proposition}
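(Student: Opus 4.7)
The statement has an \emph{if} direction and an \emph{only if} direction; the former is considerably cleaner and the latter is the harder one.

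For the \emph{if} direction, assume both $\wtl\vf$ and $\xi_\bt$ are Hom-Lie. By \cref{xi-Hom-Lie<=>rho-in-Z}, $\bt = \dl + \rho$ with $\rho \in Z(J(I(X,K)))$. The proof rests on three observations: (a) $Z(J(I(X,K)))$ annihilates $J(I(X,K))$ on both sides, so $\rho^2 = 0$ and $\bt^{-1} = \dl - \rho$; (b) $Z(J(I(X,K)))$ is a two-sided ideal of $I(X,K)$, whence $\xi_\bt(g) - g = [\rho, g] \in Z(J(I(X,K)))$ for every $g \in I(X,K)$; (c) every commutator of two elements of $I(X,K)$ has zero diagonal part, and hence lies in $J(I(X,K))$. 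From (b), $\vf(c) - \wtl\vf(c) = \xi_\bt(\wtl\vf(c)) - \wtl\vf(c) \in Z(J(I(X,K)))$, and combining with (c) gives $[[a,b], \vf(c) - \wtl\vf(c)] = 0$. Summing cyclically over $a, b, c$ makes the Hom-Jacobi expression for $\vf$ equal to that for $\wtl\vf$, which vanishes by hypothesis.

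For the \emph{only if} direction, assume $\vf$ is Hom-Lie. \cref{vf-Hom-Lie=>wtl-vf-Hom-Lie} already gives $\wtl\vf$ Hom-Lie, so by \cref{xi-Hom-Lie<=>rho-in-Z} it remains to prove $\rho := \bt - \dl \in Z(J(I(X,K)))$. I would argue by contradiction, mirroring the ``only if'' portion of \cref{xi-Hom-Lie<=>rho-in-Z}. If $\rho(x,y) \ne 0$ for some $x < y$ with, say, $x \notin \Min(X)$, pick $u < x$ and apply the Hom-Jacobi identity for both $\vf$ and $\wtl\vf$ to the triple $(e_u, e_{ux}, e_y)$. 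Since $[e_{ux}, e_y] = [e_y, e_u] = 0$, both identities collapse to $[e_{ux}, \vf(e_y)] = 0$ and $[e_{ux}, \wtl\vf(e_y)] = 0$, whence $[e_{ux}, (\vf - \wtl\vf)(e_y)] = 0$. Writing $(\vf - \wtl\vf)(e_y) = [\rho, \wtl\vf(e_y)] \bt^{-1}$ and extracting the $(u,y)$-component by the same support analysis as in \cref{xi-Hom-Lie<=>rho-in-Z} forces $\rho(x,y) = 0$, the desired contradiction. The case $y \notin \Max(X)$ is symmetric, via the triple $(e_{yv}, e_v, e_x)$ for any $v > y$.

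The main obstacle is the last step of the ``only if'' direction: isolating $\rho(x,y)$ requires controlling $\wtl\vf(e_y)$ well enough to ensure that the $(u, y)$-coefficient of $[\rho, \wtl\vf(e_y)] \bt^{-1}$ is a non-zero scalar multiple of $\rho(x, y)$. This relies on $\0_\vf = \id$ (established in \cref{0=identity}) together with the admissibility constraints on the $c$-sequence of $\wtl\vf$, which jointly guarantee the required non-degeneracy of the diagonal expansion of $\wtl\vf(e_y)$.
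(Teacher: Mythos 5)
Your ``if'' direction is correct and takes a genuinely different route from the paper's. The paper proves both implications by showing, using the explicit description of $\wtl\vf$ on the standard basis (its \cref{wtl-vf(e_xy)-cases}, which in turn requires knowing that $\wtl\vf$ is a Hom-Lie structure and invoking \cite[Lemma 2.6]{FKS2}), that the Hom-Jacobi left-hand sides of $\vf$ and of $\xi_\bt$ coincide. You instead compare $\vf$ with $\wtl\vf$: since $\rho=\bt_J\in Z(J(I(X,K)))$ and $Z(J(I(X,K)))$ is an ideal annihilating $J(I(X,K))$, one gets $\vf(c)-\wtl\vf(c)=[\rho,\wtl\vf(c)]\bt\m\in Z(J(I(X,K)))$, while every commutator $[a,b]$ has zero diagonal and hence lies in $J(I(X,K))$, so $[[a,b],\vf(c)-\wtl\vf(c)]=0$. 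This uses nothing about $\wtl\vf$ beyond linearity, avoids the paper's case analysis entirely, and in fact proves the stronger statement that $\xi_\bt\circ\psi$ is a regular Hom-Lie structure whenever $\psi$ is, once $\bt_J\in Z(J(I(X,K)))$.

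The ``only if'' direction has a genuine gap, and it is precisely the step you flag. To extract $\rho(x,y)$ from $[e_{ux},(\vf-\wtl\vf)(e_y)]=0$ you need the exact value of $\wtl\vf(e_y)$, not a ``non-degeneracy'' of its diagonal expansion. Writing $\wtl\vf(e_y)=\sum_z d_ze_z$, the $(u,y)$-coefficient of $[e_{ux},[\rho,\wtl\vf(e_y)]\bt\m]$ is $\sum_{x<b\le y}(d_b-d_x)\rho(x,b)\bt\m(b,y)$, and the cross terms with $x<b<y$ contaminate the coefficient of $\rho(x,y)$ unless $d_b=d_x$ for all $b\ne y$. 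So what is needed is the identity $\wtl\vf(e_y)=e_y+\af_y\dl$ (all off-$y$ diagonal coefficients \emph{equal}, which is the opposite of non-degeneracy); this follows because $\0_{\wtl\vf}=\id_\B$ forces $\wtl\vf$ to be proper in the sense of \cite[Lemma 2.6]{FKS2}, and it is not a consequence of the admissibility of $\0$ or of the constraint on the $c$-sequence, which is where you locate it. With that identity supplied, your computation does collapse to $\rho(x,y)e_{uy}\bt\m=0$ and the contradiction goes through. The paper uses the same identity (together with the first two lines of \cref{wtl-vf(e_xy)-cases}) to show instead that $\vf$ and $\xi_\bt$ have identical Hom-Jacobi expressions, concluding that $\xi_\bt$ is a regular Hom-Lie structure and only then reading off $\rho\in Z(J(I(X,K)))$ from \cref{xi-Hom-Lie<=>rho-in-Z} rather than reproving it.
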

	\begin{proof}
		\textit{The ``only if'' part.} Assume that $\vf$ is a regular Hom-Lie structure on $I(X,K)$. Then so is $\wtl\vf$ by \cref{vf-Hom-Lie=>wtl-vf-Hom-Lie}. Hence, $\wtl\vf$ satisfies \cref{0_vf=id,sg(xy)=1} of \cref{0=identity}. Since $\0_\vf=\id$, then $\wtl\vf$ is proper by \cite[Lemma 2.6]{FKS2}. Moreover,
		\begin{align}\label{wtl-vf(e_xy)-cases}
			\wtl\vf(e_{xy})=
			\begin{cases}
				e_{xy}, & x<y,\ x\not\in\Min(X)\text{ or } y\not\in\Max(X),\\
				\sg_\vf(x,y)e_{xy}, & x<y,\  x\in\Min(X)\text{ and } y\in\Max(X),\\
				e_x+\af_x\dl, & x=y\text{ (by \cite[the proof of Lemma 2.6]{FKS2})},
			\end{cases}
		\end{align}
		where $\{\af_x\}_{x\in X}\sst K$. Let $x<y$. If $x\not\in\Min(X)$ or $y\not\in\Max(X)$, then  $\vf(e_{xy})=\xi_\bt(e_{xy})$ by \cref{wtl-vf(e_xy)-cases}, and consequently
		\begin{align}\label{[[f_g]_vf(e_xy)]=[[f_g]_xi_bt(e_xy)]}
			[[f,g],\vf(e_{xy})]=[[f,g],\xi_\bt(e_{xy})] 
		\end{align}
		for all $f,g\in I(X,K)$. Otherwise, $\wtl\vf(e_{xy})=\sg_\vf(x,y)e_{xy}$  by \cref{wtl-vf(e_xy)-cases} and $\xi_\bt(e_{xy})=e_{xy}$, so $\vf(e_{xy})=\sg_\vf(x,y)e_{xy}$. Consequently,
		\begin{align*}
			[[f,g],\vf(e_{xy})]=\sg_\vf(x,y)[[f,g],e_{xy}]=0=[[f,g],e_{xy}]=[[f,g],\xi_\bt(e_{xy})] 
		\end{align*}
		for all $f,g\in I(X,K)$, where the second and the third equalities are due to the fact that $e_{xy}\in Z(J(I(X,K)))$ and $[f,g]\in J(I(X,K))$. Now let $x=y$. Then $\wtl\vf(e_{xy})=e_{xy}+\af_x\dl$, so $\vf(e_{xy})=\xi_\bt(e_{xy})+\af_x\dl$, and again we have \cref{[[f_g]_vf(e_xy)]=[[f_g]_xi_bt(e_xy)]}. Thus, the left-hand side of \cref{hom-jacobi} for $\vf$ is the same as for $\xi_\bt$, and $\xi_\bt$ is also a regular Hom-Lie structure on $I(X,K)$.
		
		\textit{The ``if'' part.} Assume that $\wtl\vf$ and $\xi_\bt$ are regular Hom-Lie structures on $I(X,K)$. Then $\wtl\vf$ is of the form \cref{wtl-vf(e_xy)-cases}. The proof of the ``if'' part shows that the left-hand sides of \cref{hom-jacobi} for $\vf$ and $\xi_\bt$ are the same. Since $\xi_\bt$ is a regular Hom-Lie structure on $I(X,K)$, then so is $\vf$.
	\end{proof}
	
	We gather \cref{vf-Hom-Lie<=>wtl-vf-and-xi-Hom-Lie,xi-Hom-Lie<=>rho-in-Z,0=identity,vf-Hom-Lie=>wtl-vf-Hom-Lie,0=id-and=sg=1=>vf-Hom-Lie} into the following theorem which is the main result of our work.
	\begin{theorem}\label{main-theorem}
		Let $\vf\in\laut(I(X,K))$. Then $\vf$ is a regular Hom-Lie structure on $I(X,K)$ if and only if $\vf=\xi_\bt\circ\tau_{\0,\sg,c}$, where 
		\begin{enumerate}
			\item $\0=\id_\B$;\label{0-identity}
			\item $\sg(x,y)=1$ unless $x\in\Min(X)$ and $y\in\Max(X)$;\label{sg=1-unles-x-min-and-y-max}
			\item $\bt_D=\dl$ and $\bt_J\in Z(J(I(X,K)))$.\label{bt=dl+rho-rho-in-Z}
		\end{enumerate}
		Moreover, such $\bt$, $\0$, $\sg$ and $c$ are unique.
	\end{theorem}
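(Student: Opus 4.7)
The plan is to assemble the theorem directly from the preceding results, so no substantial new calculation is needed: \cref{main-theorem} is essentially a bookkeeping combination. Given $\vf\in\laut(I(X,K))$, the semidirect product decomposition in \cref{semidireto} gives a \emph{unique} factorization $\vf=\xi_\bt\circ\wtl\vf$ with $\xi_\bt\in\inn_1(I(X,K))$ (so $\bt_D=\dl$) and $\wtl\vf\in\wtl\laut(I(X,K))$, and the latter is in turn determined uniquely by a triple $(\0,\sg,c)$ via $\wtl\vf=\tau_{\0,\sg,c}$. This immediately settles the uniqueness claim of the theorem, once one checks the classes of $\bt$, $\0$, $\sg$, $c$ are preserved by the equivalence.

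For the equivalence, I would proceed in two directions. Suppose first that $\vf$ is a regular Hom-Lie structure. Applying \cref{vf-Hom-Lie<=>wtl-vf-and-xi-Hom-Lie} splits the hypothesis into two pieces: $\wtl\vf\in\wtl\laut(I(X,K))$ is a regular Hom-Lie structure, and $\xi_\bt\in\inn_1(I(X,K))$ is a regular Hom-Lie structure. From \cref{vf-Hom_lie<=>0=id-and-sg=1} applied to $\wtl\vf$ I extract conditions \cref{0-identity,sg=1-unles-x-min-and-y-max}, namely $\0=\id_\B$ and $\sg(x,y)=1$ whenever $x\notin\Min(X)$ or $y\notin\Max(X)$. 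From \cref{xi-Hom-Lie<=>rho-in-Z} applied to $\xi_\bt$, writing $\bt=\dl+\bt_J$ with $\bt_J\in J(I(X,K))$, I get $\bt_J\in Z(J(I(X,K)))$, which is \cref{bt=dl+rho-rho-in-Z}.

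Conversely, assume $\vf=\xi_\bt\circ\tau_{\0,\sg,c}$ with \cref{0-identity,sg=1-unles-x-min-and-y-max,bt=dl+rho-rho-in-Z} in force. \cref{0=id-and=sg=1=>vf-Hom-Lie} then yields that $\wtl\vf=\tau_{\0,\sg,c}$ is a regular Hom-Lie structure on $I(X,K)$. The ``if'' direction of \cref{xi-Hom-Lie<=>rho-in-Z} gives that $\xi_\bt$ is also a regular Hom-Lie structure. Finally, \cref{vf-Hom-Lie<=>wtl-vf-and-xi-Hom-Lie} combines these two to conclude that $\vf$ itself is a regular Hom-Lie structure.

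The only point requiring care, and the one place where something might be called an obstacle, is the uniqueness assertion in the ``moreover'' clause: one must argue that the four data $(\bt,\0,\sg,c)$ extracted from $\vf$ are uniquely determined. This follows by first invoking \cref{semidireto} for the uniqueness of $(\bt,\wtl\vf)$ subject to $\bt_D=\dl$, and then the uniqueness of $(\0,\sg,c)$ determining an elementary Lie automorphism $\tau_{\0,\sg,c}$, which is the content of~\cite[Definitions 4.1, 4.7 and Theorem 5.18]{FKS} as quoted in the preliminaries. With that, the theorem is fully proved.
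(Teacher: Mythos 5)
Your proposal is correct and matches the paper's intended argument exactly: the paper explicitly presents \cref{main-theorem} as a gathering of \cref{vf-Hom-Lie<=>wtl-vf-and-xi-Hom-Lie,xi-Hom-Lie<=>rho-in-Z,0=identity,vf-Hom-Lie=>wtl-vf-Hom-Lie,0=id-and=sg=1=>vf-Hom-Lie}, with uniqueness coming from \cref{semidireto} and the uniqueness of the triple $(\0,\sg,c)$ for elementary Lie automorphisms. No discrepancies to report.
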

	
	\begin{corollary}\label{vf=xi_bt-circ-M_sg+nu}
		Let $\vf\in\laut(I(X,K))$. Then $\vf$ is a regular Hom-Lie structure on $I(X,K)$ if and only if $\vf=\xi_\bt\circ M_\sg+\nu$, where $\bt$ and $\sg$ satisfy \cref{bt=dl+rho-rho-in-Z,sg=1-unles-x-min-and-y-max} of \cref{main-theorem} and $\nu$ is a central-valued linear map annihilating $J(I(X,K))$. Moreover, such $\bt$, $\sg$ and $\nu$ are unique.
	\end{corollary}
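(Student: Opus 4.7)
The plan is to use Theorem \ref{main-theorem} and to translate the elementary factor $\tau_{\0,\sg,c}$ in the case $\0=\id_\B$ into a pair $(M_\sg,\nu)$, where $\nu$ absorbs the diagonal contribution as a central-valued map annihilating the radical.

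Assume first that $\vf$ is a regular Hom-Lie structure. By Theorem \ref{main-theorem}, $\vf=\xi_\bt\circ\tau_{\0,\sg,c}$ with $\0=\id_\B$ and with $\bt,\sg$ satisfying \cref{bt=dl+rho-rho-in-Z,sg=1-unles-x-min-and-y-max}. When $\0=\id_\B$, the compatibility condition collapses to $\sg(x,z)=\sg(x,y)\sg(y,z)$ for all $x<y<z$, so $\sg$ extends uniquely to a multiplicative element of $I(X,K)$ by $\sg(x,x)=1$. Then $M_\sg(e_{xy})=\sg(x,y)e_{xy}=\tau_{\0,\sg,c}(e_{xy})$ for every $x<y$, while on $D(X,K)$ the analysis carried out in the proof of Proposition \ref{vf-Hom-Lie<=>wtl-vf-and-xi-Hom-Lie} (relying on \cite[Lemma 2.6]{FKS2}) gives $\tau_{\0,\sg,c}(e_x)=e_x+\af_x\dl$ for suitable $\af_x\in K$. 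Setting $\mu(e_x):=\af_x\dl$ and $\mu|_\B:=0$ yields $\tau_{\0,\sg,c}=M_\sg+\mu$, whence $\vf=\xi_\bt\circ M_\sg+\nu$ with $\nu:=\xi_\bt\circ\mu$. Because $\mu$ takes values in $K\dl=Z(I(X,K))$ and $\xi_\bt$ fixes $K\dl$ pointwise, $\nu$ is central-valued and annihilates $J(I(X,K))$.

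For the converse, given $\vf=\xi_\bt\circ M_\sg+\nu\in\laut(I(X,K))$ satisfying the stated hypotheses, set $\mu:=\xi_\bt\m\circ\nu$ and $\tau:=M_\sg+\mu=\xi_\bt\m\circ\vf\in\laut(I(X,K))$. Since $\xi_\bt$ fixes $K\dl$, $\mu$ is still central-valued and vanishes on $J(I(X,K))$, so $\tau(e_{xy})=\sg(x,y)e_{xy}$ on $\B$ and $\tau(e_{x_i})=e_{x_i}+\af_{x_i}\dl$ on $D(X,K)$, where $\mu(e_{x_i})=\af_{x_i}\dl$. Writing $c_i:=\tau(e_{x_i})(x_1,x_1)$ gives $c_1=1+\af_{x_1}$, $c_i=\af_{x_i}$ for $i\ne 1$, and $\sum_i c_i=1+\sum_i\af_{x_i}$. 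The last quantity is nonzero because $\tau$ is invertible and therefore restricts to a bijection of $D(X,K)$ (a short matrix-determinant-lemma computation). By uniqueness in \cite[Lemmas~5.8 and 5.16]{FKS}, $\tau=\tau_{\id_\B,\sg,c}$ is elementary, so $\vf=\xi_\bt\circ\tau_{\id_\B,\sg,c}$ fits the form of Theorem \ref{main-theorem} and is a regular Hom-Lie structure.

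The uniqueness of $(\bt,\sg,\nu)$ follows from the uniqueness of $(\bt,\0,\sg,c)$ in Theorem \ref{main-theorem} via the bijective correspondence $c_i\leftrightarrow\nu(e_{x_i})=(c_i-\dl_{i,1})\dl$ (with $\nu|_\B=0$). There is no substantive obstacle; the only care needed is to check that the central-valued map $\nu$ carries exactly the same information as the diagonal parameter $c$ of $\tau_{\0,\sg,c}$, and this is what the identification $\mu=\xi_\bt\m\circ\nu$ with values in $K\dl$ achieves.
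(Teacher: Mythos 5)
Your proof is correct and follows essentially the same route as the paper: both translate \cref{main-theorem} by rewriting $\tau_{\id_\B,\sg,c}$ as $M_\sg$ plus a central-valued map annihilating $J(I(X,K))$, using the explicit form \cref{wtl-vf(e_xy)-cases} of $\wtl\vf$ on the standard basis and the fact that $\xi_\bt$ fixes the centre pointwise. The only difference is that you spell out the converse direction (recovering $\tau_{\id_\B,\sg,c}$ from $M_\sg+\mu$, including the verification that $\sum_i c_i\ne 0$) in more detail than the paper, which leaves that step to the uniqueness of the decomposition.
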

	\begin{proof}
		By \cref{main-theorem}, $\vf$ is a regular Hom-Lie structure on $I(X,K)$ if and only if $\vf=\xi_\bt\circ\tau_{\0,\sg,c}$ for unique $\0,\sg$ and $\bt$ satisfying \cref{0-identity,bt=dl+rho-rho-in-Z,sg=1-unles-x-min-and-y-max}. In this case $\wtl\vf=\tau_{\0,\sg,c}$, whose action on the standard basis is given by \cref{wtl-vf(e_xy)-cases}. Define $\nu(e_{xy})=0$ for $x<y$ and $\nu(e_x)=\af_x\dl$. Then $\nu$ is a unique central-valued linear map annihilating $J(I(X,K))$ and such that $\wtl\vf=M_\sg+\nu$, where $\sg$ is the extension of $\sg_{\vf}$ to $X^2_\le$ by $\sg(x,x)=1$. It follows that $\vf=\xi_\bt\circ\wtl\vf=\xi_\bt\circ M_\sg+\nu$, and such a representation is unique.
	\end{proof}
	
	In the case $l(X)=1$ the conditions \cref{sg=1-unles-x-min-and-y-max} on $\sg$ and $\bt_J\in Z(J(I(X,K)))$ on $\bt$ become redundant, while the condition $\bt_D=\dl$ on $\bt$ can be dropped at the cost of the uniqueness of $\bt$.
	\begin{corollary}\label{dropping-bt_D=id}
		Let $l(X)=1$ and $\vf\in\laut(I(X,K))$. Then $\vf$ is a regular Hom-Lie structure on $I(X,K)$ if and only if $\vf=\xi_\bt\circ M_\sg+\nu$, where $\nu$ is a central-valued linear map annihilating $J(I(X,K))$. Moreover, such $\sg$ and $\nu$ are unique.
	\end{corollary}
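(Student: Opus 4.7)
The plan is to deduce this as a direct specialization of \cref{vf=xi_bt-circ-M_sg+nu}, using two collapses that occur when $l(X)=1$. Since any chain containing a strict comparison $u<x<y$ or $x<y<v$ has length at least $2$, in length one every pair $x<y$ satisfies $x\in\Min(X)$ and $y\in\Max(X)$. Hence condition \cref{sg=1-unles-x-min-and-y-max} is vacuous and $Z(J(I(X,K)))=J(I(X,K))$, so condition \cref{bt=dl+rho-rho-in-Z} reduces to $\bt_D=\dl$.

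The ``only if'' direction is then immediate from \cref{vf=xi_bt-circ-M_sg+nu}: the decomposition it provides already takes the form required here, with $\bt_D=\dl$ being one admissible choice.

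For the ``if'' direction, given $\vf=\xi_\bt\circ M_\sg+\nu$ with arbitrary $\bt\in I(X,K)^*$, I would factor $\bt=\bt''\bt_D$ with $\bt'':=\bt\bt_D\m=\dl+\bt_J\bt_D\m$, so that $\bt''_D=\dl$ and $\bt''_J\in J(I(X,K))=Z(J(I(X,K)))$. A direct computation $\bt_D e_{xy}\bt_D\m=\bt(x,x)\bt(y,y)\m e_{xy}$ identifies $\xi_{\bt_D}$ with the multiplicative automorphism $M_\tau$, where $\tau$ is the multiplicative element given by $\tau(x,x)=1$ and $\tau(x,y)=\bt(x,x)\bt(y,y)\m$ for $x<y$. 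Since $\xi_{ab}=\xi_a\circ\xi_b$ and $M_\tau\circ M_\sg=M_{\tau\ast\sg}$, we obtain
\[
\vf=\xi_{\bt''}\circ M_{\tau\ast\sg}+\nu,
\]
which fulfills all the hypotheses of \cref{vf=xi_bt-circ-M_sg+nu} (the $\sg$-condition being vacuous), so $\vf$ is a regular Hom-Lie structure.

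Uniqueness of $\nu$ is immediate: since $\xi_\bt(e_x)_D=e_x$ and $M_\sg(e_x)=e_x$, the formula $\nu(e_x)=\vf(e_x)_D-e_x$ shows that $\nu$ depends only on $\vf$. Uniqueness of $\sg$ is then obtained by matching $\vf(e_{xy})=\sg(x,y)\bt(x,x)\bt(y,y)\m e_{xy}$ against the canonical decomposition of \cref{vf=xi_bt-circ-M_sg+nu}. The main obstacle is the bookkeeping involved in separating the diagonal and off-diagonal parts of $\bt$ and rerouting the former through the multiplicative factor; once the identification $\xi_{\bt_D}=M_\tau$ is in hand, the rest unwinds routinely.
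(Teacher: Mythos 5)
Your argument is correct and essentially the paper's own: both proofs reduce an arbitrary $\bt$ to one with trivial diagonal by absorbing $\bt_D$ into the multiplicative factor --- the paper evaluates $\xi_\bt\circ M_\sg$ on the basis and sets $\sg'(x,y)=\bt(x,x)\bt(y,y)\m\sg(x,y)$ and $\bt'=\dl+\bt_J\bt_D\m$, which is precisely your factorization $\xi_\bt=\xi_{\bt\bt_D\m}\circ\xi_{\bt_D}$ with $\xi_{\bt_D}=M_\tau$ written in different notation. One caveat: your uniqueness argument for $\sg$ does not close as stated, since the relation $\vf(e_{xy})=\sg(x,y)\bt(x,x)\bt(y,y)\m e_{xy}$ only determines the product $\sg(x,y)\bt(x,x)\bt(y,y)\m$, so $\sg$ is pinned down only after $\bt_D$ is normalized (e.g.\ to $\dl$); the paper's proof leaves this point implicit as well.
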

	\begin{proof}
		Item \cref{sg=1-unles-x-min-and-y-max} of \cref{main-theorem} is trivially satisfied, because for all $x<y$ we have $x\in\Min(X)$ and $y\in\Max(X)$. It is also clear that $\bt_J\in Z(J(I(X,K)))$, since $Z(J(I(X,K)))=J(I(X,K))$. So we only need to prove that $\bt_D=\dl$ can be dropped from \cref{main-theorem}\cref{bt=dl+rho-rho-in-Z}. This will be done via a suitable modification of $\sg$ that does not change $\xi_\bt\circ M_\sg$. Namely, we are going to show that for any $\bt\in I(X,K)^*$ and $\sg:X^2_<\to K^*$ there are $\bt'\in I(X,K)^*$ and $\sg':X^2_<\to K^*$ such that $\bt'_D=\dl$ and $\xi_\bt\circ M_\sg=\xi_{\bt'}\circ M_{\sg'}$. Write $\bt_D=\ve$ and $\bt_J=\rho$. Then $\bt\m=\ve\m-\ve\m\rho\ve\m$. Observe that 
		\begin{align*}
			(\xi_\bt\circ M_\sg)(e_{xy})&=\bt\sg(x,y)e_{xy}\bt\m=\ve(x,x)\ve(y,y)\m\sg(x,y)e_{xy},\ x<y,\\
			(\xi_\bt\circ M_\sg)(e_x)&=\bt e_x\bt\m=(\ve+\rho)e_x(\ve\m-\ve\m\rho\ve\m)=e_x+\rho\ve\m e_x-e_x\rho\ve\m.
		\end{align*}
		Thus, it suffices to take $\sg'(x,y)=\ve(x,x)\ve(y,y)\m\sg(x,y)$ (observe that we may define $\sg'(x,y)$ as we wish, since any map $X^2_<\to K^*$ is compatible with $\0$) and $\bt'=\dl+\rho\ve\m$.
	\end{proof}
	
	\begin{corollary}\label{Hom-Lie-T_n(K)}
		Let $X$ be a finite chain and $\vf\in\laut(I(X,K))$. Then $\vf$ is a regular Hom-Lie structure on $I(X,K)$ if and only if $\vf=\xi_\bt+\nu$, where $\nu$ is a unique central-valued linear map annihilating $J(I(X,K))$ and $\bt$ is either a unique element of $I(X,K)^*$ satisfying \cref{bt=dl+rho-rho-in-Z} of \cref{main-theorem} (if $|X|>2$) or an arbitrary element of $I(X,K)^*$ (if $|X|=2$).
	\end{corollary}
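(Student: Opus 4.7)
The plan is to specialize Corollary \ref{vf=xi_bt-circ-M_sg+nu} when $|X|>2$ and Corollary \ref{dropping-bt_D=id} when $|X|=2$, by showing that in each case the multiplicative factor $M_\sg$ can be absorbed into (or trivialized inside) the inner part $\xi_\bt$. Throughout I would write $X=\{x_1<\cdots<x_n\}$; since $X$ is a chain, $\Min(X)=\{x_1\}$ and $\Max(X)=\{x_n\}$.

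First I would treat the case $|X|>2$. By Corollary \ref{vf=xi_bt-circ-M_sg+nu}, $\vf$ is a regular Hom-Lie structure iff $\vf=\xi_\bt\circ M_\sg+\nu$ with unique $\bt,\sg,\nu$ satisfying conditions \cref{sg=1-unles-x-min-and-y-max} and \cref{bt=dl+rho-rho-in-Z} of \cref{main-theorem}. Since $\0_\vf=\id_\B$, the compatibility of $\sg$ with $\0_\vf$ becomes plain multiplicativity $\sg(x,z)=\sg(x,y)\sg(y,z)$ for $x<y<z$. Condition \cref{sg=1-unles-x-min-and-y-max} forces $\sg(x,y)=1$ for every pair other than possibly $(x_1,x_n)$. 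Picking any $x_i$ with $1<i<n$ (available because $n>2$), we have $x_i\notin\Min(X)\cup\Max(X)$, so $\sg(x_1,x_i)=\sg(x_i,x_n)=1$ and multiplicativity gives $\sg(x_1,x_n)=1$ as well. Hence $M_\sg=\id$ and $\vf=\xi_\bt+\nu$ with unique $\bt$ satisfying \cref{bt=dl+rho-rho-in-Z}.

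For $|X|=2$ I would invoke Corollary \ref{dropping-bt_D=id} to get $\vf=\xi_\bt\circ M_\sg+\nu$ with arbitrary $\bt\in I(X,K)^*$ and unique $\sg,\nu$, and then absorb $M_\sg$ into $\xi_\bt$. Setting $\lb=\sg(x_1,x_2)$ and $\bt_\lb=\lb e_{x_1}+e_{x_2}\in I(X,K)^*$, a direct check on the basis $\{e_{x_1},e_{x_2},e_{x_1x_2}\}$ gives $M_\sg=\xi_{\bt_\lb}$, whence $\xi_\bt\circ M_\sg=\xi_{\bt\bt_\lb}$. Writing $\bt':=\bt\bt_\lb$ one obtains $\vf=\xi_{\bt'}+\nu$ with $\bt'$ arbitrary in $I(X,K)^*$. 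For the converse, any $\vf=\xi_{\bt'}+\nu$ of the claimed form equals $\xi_{\bt'}\circ M_{\sg_0}+\nu$ with $\sg_0\equiv 1$, which is a regular Hom-Lie structure by Corollary \ref{dropping-bt_D=id}.

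The only delicate point to flag is the passage from compatibility of $\sg$ with $\0_\vf=\id_\B$ to ordinary multiplicativity of $\sg$; once this is in place, both subcases reduce to elementary bookkeeping (including the uniqueness of $\nu$, which is inherited from the invoked corollaries), and I do not anticipate a real obstacle.
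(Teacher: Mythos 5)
Your proposal is correct and follows essentially the same route as the paper: for $|X|>2$ the multiplicativity of $\sg$ (coming from compatibility with $\0=\id_\B$) together with condition \cref{sg=1-unles-x-min-and-y-max} forces $M_\sg=\id$, and for $|X|=2$ one realizes $M_\sg$ as an inner automorphism $\xi_{\bt_\lb}$ (the paper's $\xi_\eta$ with the same diagonal element) and absorbs it into $\xi_\bt$ via \cref{dropping-bt_D=id}. No gaps.
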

	\begin{proof}
		Let $\vf=\xi_\bt\circ M_\sg+\nu$ as in \cref{vf=xi_bt-circ-M_sg+nu}. Write $X=\{x_1,\dots,x_n\}$, where $x_1<\dots<x_n$. Observe that $\sg(x_i,x_j)=1$ unless $i=1$ and $j=n$. If $n>2$, then $\sg(x_1,x_n)=\sg(x_1,x_2)\sg(x_2,x_n)=1$. Hence, $M_\sg=\id$. Otherwise, $M_\sg=\xi_\eta$, where $\eta(x_1,x_1)=\sg(x_1,x_2)$, $\eta(x_2,x_2)=1$ and $\eta(x_1,x_2)=0$. Therefore, $\xi_\bt\circ M_\sg=\xi_{\bt\eta}$. Since $l(X)=1$ in this case, $\bt$ is an arbitrary element of $I(X,K)^*$ by \cref{dropping-bt_D=id}. Hence, so is $\bt\eta$.
	\end{proof}

	\begin{corollary}\label{Hom-Lie-normal-subgroup}
		The regular Hom-Lie structures on $I(X,K)$ form a normal subgroup of $\laut(I(X,K))$.
	\end{corollary}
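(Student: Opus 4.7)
The plan is to combine the characterization of \cref{main-theorem} with the conjugation principle of \cref{psi-vf-psi-inv-Hom-Lie}. Let $H$ denote the set of regular Hom-Lie structures on $I(X,K)$. Since any conjugate of an element of $H$ by a Lie automorphism remains in $H$ by \cref{psi-vf-psi-inv-Hom-Lie}, normality of $H$ in $\laut(I(X,K))$ is automatic once $H$ is shown to be a subgroup. Thus the real task is to verify closure under composition and inversion.

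By \cref{main-theorem}, $H = H_1 \cdot \wtl H$, where
\[
H_1 := \{\xi_\bt \in \inn_1(I(X,K)) : \bt_J \in Z(J(I(X,K)))\}
\]
and $\wtl H$ consists of the elementary Lie automorphisms $\tau_{\id_\B,\sg,c}$ whose $\sg$ satisfies \cref{sg=1-unles-x-min-and-y-max}. First I would verify that $H_1$ is a subgroup of $\inn_1(I(X,K))$: since $Z(J(I(X,K)))$ is the bilateral annihilator of $J(I(X,K))$, any two of its elements multiply to zero, so $(\dl+\rho)(\dl+\rho') = \dl + (\rho+\rho')$ and $(\dl+\rho)^{-1} = \dl - \rho$, both of which remain in $H_1$. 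Next, $\wtl H$ is a subgroup of $\wtl\laut(I(X,K))$: on each $e_{xy}$ with $x < y$ its elements act by scalar multiplication by $\sg(x,y)$, so by uniqueness of the triple $(\0,\sg,c)$ the composition and the inverse of elements of $\wtl H$ are again elementary with $\0=\id_\B$, and the Min-Max condition on $\sg$ is preserved under pointwise multiplication and inversion.

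The final step is that $\wtl H$ normalizes $H_1$. Given $\tau \in \wtl H$ and $\xi_\bt \in H_1$, \cref{semidireto} guarantees $\tau \xi_\bt \tau^{-1} = \xi_{\bt'}$ for some $\xi_{\bt'} \in \inn_1(I(X,K))$, and \cref{psi-vf-psi-inv-Hom-Lie} ensures $\xi_{\bt'}$ is a regular Hom-Lie structure, whence \cref{xi-Hom-Lie<=>rho-in-Z} forces $\bt'_J \in Z(J(I(X,K)))$, i.e.\ $\xi_{\bt'} \in H_1$. Combining the three steps, $H = H_1 \cdot \wtl H$ is a subgroup of $\laut(I(X,K))$, and its normality follows immediately from \cref{psi-vf-psi-inv-Hom-Lie}. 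The expected obstacle is the normalization step, but \cref{psi-vf-psi-inv-Hom-Lie} allows it to be handled without ever computing $\bt'$ explicitly: we only need to know a priori that the conjugate is in $\inn_1(I(X,K))$ and is a Hom-Lie structure, and the characterization of $H_1$ takes care of the rest.
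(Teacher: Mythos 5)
Your proof is correct, but it takes a genuinely different route from the paper's. You treat the set $H$ of regular Hom-Lie structures abstractly as the product $H_1\cdot\wtl H$ of two subgroups furnished by \cref{main-theorem}, verify separately that $H_1$ and $\wtl H$ are subgroups (the first because $Z(J(I(X,K)))$ annihilates itself, the second via the uniqueness of $(\0,\sg,c)$ and the fact that $\wtl\laut(I(X,K))$ is a subgroup), and then dispose of the only delicate point --- that $\wtl H$ normalizes $H_1$ --- by combining the conjugation principle of \cref{psi-vf-psi-inv-Hom-Lie} with the normality of $\inn_1(I(X,K))$ coming from \cref{semidireto} and the ``only if'' direction of \cref{xi-Hom-Lie<=>rho-in-Z}; this cleverly avoids ever computing the conjugated $\bt'$. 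The paper instead argues computationally: it writes $\vf=\xi_\bt\circ M_{\sg_\vf}+\nu_\vf$ as in \cref{vf=xi_bt-circ-M_sg+nu}, explicitly evaluates $\vf\circ\psi$ on $J(I(X,K))$ and on $D(X,K)$ to exhibit the new data $\eta=\dl+\bt_J+M_{\sg_\vf}(\gm_J)$, $\sg_\vf*\sg_\psi$ and $\mu$, and for the inverse uses $\wtl{\vf\m}=(\wtl\vf)\m$ together with a direct check that $\gm_J\in Z(J(I(X,K)))$. Your argument is shorter and more structural; the paper's has the advantage of producing explicit formulas for the parameters of the composition. The one point you should make explicit is that $\tau\xi_\bt\tau\m\in\inn_1(I(X,K))$ rests on $\inn_1(I(X,K))$ being the \emph{normal} factor of the semidirect product in \cref{semidireto}, not merely on the unique factorization; with that noted, all steps are sound.
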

	\begin{proof}
		Let $\vf,\psi\in\laut(I(X,K))$ be regular Hom-Lie structures on $I(X,K)$. Write $\vf=\xi_\bt\circ M_{\sg_\vf}+\nu_\vf$ and $\psi=\xi_\gm\circ M_{\sg_\psi}+\nu_\psi$ as in \cref{vf=xi_bt-circ-M_sg+nu}. Then $\vf|_{J(I(X,K))}=(M_{\sg_\vf})|_{J(I(X,K))}$ and $\psi|_{J(I(X,K))}=(M_{\sg_\psi})|_{J(I(X,K))}$ because 
		\begin{align*}
			(\nu_\vf)|_{J(I(X,K))}&=(\nu_\psi)|_{J(I(X,K))}=0,\\
			(\xi_\bt)|_{J(I(X,K))}&=(\xi_\gm)|_{J(I(X,K))}=\id_{J(I(X,K))},
		\end{align*}
		so
		\begin{align}\label{(vf-circ-psi)|_J=M_sg_vf_sg_psi}
			(\vf\circ\psi)|_{J(I(X,K))}=(M_{\sg_\vf*\sg_\psi})|_{J(I(X,K))}.
		\end{align}
		Take $x\in X$ and observe that
		\begin{align}\label{vf-circ-psi(e_x)=(xi_bt-M_sg_vf-xi_gm)(e_x)+mu(e_x)}
			(\vf\circ\psi)(e_x)=\vf(\xi_\gm(e_x)+\nu_\psi(e_x))&=(\xi_\bt\circ M_{\sg_\vf}\circ\xi_\gm)(e_x)+\mu(e_x),
		\end{align}
		where $\mu:=\nu_\vf+\nu_\psi+\nu_\vf\circ\nu_\psi$ is a central-valued linear map that annihilates $J(I(X,K))$. Now,
		\begin{align*}
			\xi_\gm(e_x)=(\dl+\gm_J)e_x(\dl-\gm_J)=e_x+\gm_Je_x-e_x\gm_J.
		\end{align*}
		Hence,
		\begin{align*}
			(\xi_\bt\circ M_{\sg_\vf}\circ\xi_\gm)(e_x)&=(\xi_\bt\circ M_{\sg_\vf})(e_x+\gm_Je_x-e_x\gm_J)\\
			&=\xi_\bt(e_x+M_{\sg_\vf}(\gm_J)e_x-e_xM_{\sg_\vf}(\gm_J))\\
			&=e_x+\bt_Je_x-e_x\bt_J+M_{\sg_\vf}(\gm_J)e_x-e_xM_{\sg_\vf}(\gm_J).
		\end{align*}
		Thus, defining $\eta:=\dl+\bt_J+M_{\sg_\vf}(\gm_J)$, we have $\eta_D=\dl$, $\eta_J\in Z(J(I(X,K)))$ and $(\xi_\bt\circ M_{\sg_\vf}\circ\xi_\gm)(e_x)=\xi_\eta(e_x)$. Together with \cref{vf-circ-psi(e_x)=(xi_bt-M_sg_vf-xi_gm)(e_x)+mu(e_x)} this gives $(\vf\circ\psi)|_{D(X,K)}=(\xi_\eta\circ M_{\sg_\vf*\sg_\psi}+\mu)|_{D(X,K)}$. Since, moreover, $(\vf\circ\psi)|_{J(I(X,K))}=(\xi_\eta\circ M_{\sg_\vf*\sg_\psi}+\mu)|_{J(I(X,K))}$ by \cref{(vf-circ-psi)|_J=M_sg_vf_sg_psi}, we conclude that $\vf\circ\psi=\xi_\eta\circ M_{\sg_\vf*\sg_\psi}+\mu$, so $\vf\circ\psi$ is a regular Hom-Lie structure on $I(X,K)$ by \cref{vf=xi_bt-circ-M_sg+nu}.
		
		Let $\vf$ be a regular Hom-Lie structure on $I(X,K)$. Write $\vf=\xi_\bt\circ\wtl\vf$ and $\vf\m=\xi_\gm\circ\wtl{\vf\m}$, where $\bt_D=\gm_D=\dl$ and $\bt_J\in Z(J(I(X,K)))$. Since $\wtl{\vf\m}=(\wtl\vf)\m$ by \cite[Proposition 4.2]{FKS}, then $\0_{\vf\m}=(\0_\vf)\m$ and $\sg_{\vf\m}(x,y)=\sg_\vf(x,y)\m$ for all $x<y$. It follows from \cref{vf-Hom_lie<=>0=id-and-sg=1} that $\wtl{\vf\m}$ is also a regular Hom-Lie structure on $I(X,K)$. Now observe that $\vf(e_{xy})=\wtl\vf(e_{xy})$ for all $x<y$. Then for all $x<y$ we have
		\begin{align*}
			e_{xy}=(\vf\m\circ\vf)(e_{xy})=(\xi_\gm\circ\wtl{\vf\m}\circ\wtl\vf)(e_{xy})=\xi_\gm(e_{xy}).
		\end{align*}
		Hence, $\gm e_{xy}=e_{xy}\gm$ for all $x<y$. But $\gm_D=\dl$, so $\gm_J e_{xy}=e_{xy}\gm_J$ for all $x<y$. Thus, $\gm_J\in Z(J(I(X,K)))$, and $\xi_\gm$ is a regular Hom-Lie structure on $I(X,K)$ by \cref{xi-Hom-Lie<=>rho-in-Z}. Then so is $\vf\m$ by \cref{vf-Hom-Lie<=>wtl-vf-and-xi-Hom-Lie}.
		
		We have proved that regular Hom-Lie structures on $I(X,K)$ form a subgroup of $\laut(I(X,K))$. It is normal by \cref{psi-vf-psi-inv-Hom-Lie}. 
	\end{proof}
	
	\section*{Acknowledgements}
	
	The second author was partially supported by CMUP, member of LASI, which is financed by national funds through FCT --- Fundação para a Ciência e a Tecnologia, I.P., under the project with reference UIDB/00144/2020.
	\bibliography{bibl}{}

\begin{thebibliography}{10}

\bibitem{Baclawski72}
{\sc Baclawski, K.}
\newblock {Automorphisms and derivations of incidence algebras}.
\newblock {\em Proc. Amer. Math. Soc. 36}, 2 (1972), 351--356.

\bibitem{Benayadi-Makhlouf14}
{\sc Benayadi, S., and Makhlouf, A.}
\newblock Hom-{L}ie algebras with symmetric invariant nondegenerate bilinear
  forms.
\newblock {\em J. Geom. Phys. 76\/} (2014), 38--60.

\bibitem{Cao-Luo13}
{\sc Cao, B., and Luo, L.}
\newblock Hom-{L}ie superalgebra structures on finite-dimensional simple {L}ie
  superalgebras.
\newblock {\em J. Lie Theory 23}, 4 (2013), 1115--1128.

\bibitem{Chen-Yu20}
{\sc Chen, Z., and Yu, Y.}
\newblock Regular {H}om-{L}ie structures on {B}orel subalgebras of
  finite-dimensional simple {L}ie algebras.
\newblock {\em Comm. Algebra 48}, 5 (2020), 2065--2071.

\bibitem{Chen-Yu22}
{\sc Chen, Z., and Yu, Y.}
\newblock Regular {H}om-{L}ie structures on strictly upper triangular matrix
  {L}ie algebras.
\newblock {\em J. Algebra Appl. 21}, 4 (2022), Paper No. 2250081, 10.

\bibitem{FKS}
{\sc Fornaroli, {\'E}.~Z., Khrypchenko, M., and Santulo~Jr., E.~A.}
\newblock {Lie automorphisms of incidence algebras}.
\newblock {\em Proc. Amer. Math. Soc. 150}, 4 (2022), 1477--1492.

\bibitem{FKS2}
{\sc Fornaroli, {\'E}.~Z., Khrypchenko, M., and Santulo~Jr., E.~A.}
\newblock {Proper Lie automorphisms of incidence algebras}.
\newblock {\em Glasgow Math. J. 64}, 3 (2022), 702--715.

\bibitem{Hartwig-Larsson-Silvestrov06}
{\sc Hartwig, J.~T., Larsson, D., and Silvestrov, S.~D.}
\newblock Deformations of {L}ie algebras using {$\sigma$}-derivations.
\newblock {\em J. Algebra 295}, 2 (2006), 314--361.

\bibitem{Jin-Li08}
{\sc Jin, Q., and Li, X.}
\newblock Hom-{L}ie algebra structures on semi-simple {L}ie algebras.
\newblock {\em J. Algebra 319}, 4 (2008), 1398--1408.

\bibitem{Makhlouf-Silvestrov08}
{\sc Makhlouf, A., and Silvestrov, S.~D.}
\newblock Hom-algebra structures.
\newblock {\em J. Gen. Lie Theory Appl. 2}, 2 (2008), 51--64.

\bibitem{Makhlouf-Zusmanovich18}
{\sc Makhlouf, A., and Zusmanovich, P.}
\newblock Hom-{L}ie structures on {K}ac-{M}oody algebras.
\newblock {\em J. Algebra 515\/} (2018), 278--297.

\bibitem{Rota64}
{\sc Rota, G.-C.}
\newblock {On the foundations of combinatorial theory. I. Theory of M{\"o}bius
  functions}.
\newblock {\em Z. Wahrscheinlichkeitstheorie und Verw. Gebiete 2}, 4 (1964),
  340--368.

\bibitem{SpDo}
{\sc {Spiegel}, E., and {O'Donnell}, C.~J.}
\newblock {\em {Incidence Algebras}}.
\newblock New York, NY: Marcel Dekker, 1997.

\bibitem{St}
{\sc {Stanley}, R.}
\newblock {Structure of incidence algebras and their automorphism groups}.
\newblock {\em {Bull. Am. Math. Soc.} 76\/} (1970), 1236--1239.

\bibitem{Xie-Jin-Liu15}
{\sc Xie, W., Jin, Q., and Liu, W.}
\newblock {${\rm Hom}$}-structures on semi-simple {L}ie algebras.
\newblock {\em Open Math. 13}, 1 (2015), 617--630.

\bibitem{Xie-Liu17}
{\sc Xie, W., and Liu, W.}
\newblock Hom-structures on simple graded {L}ie algebras of finite growth.
\newblock {\em J. Algebra Appl. 16}, 8 (2017), 1750154, 18.

\bibitem{Yuan-Liu15}
{\sc Yuan, J., and Liu, W.}
\newblock Hom-structures on finite-dimensional simple {L}ie superalgebras.
\newblock {\em J. Math. Phys. 56}, 6 (2015), 061702, 15.

\end{thebibliography}
	\bibliographystyle{acm}
	
\end{document}